\documentclass[10pt]{amsart}
\usepackage{mathrsfs}
\usepackage{epsfig}
\usepackage[dvipsnames]{xcolor}
\usepackage{graphicx}
\usepackage{amsmath}
\usepackage{amsfonts}
\usepackage{amssymb}
\usepackage{bbm}
\usepackage{bm}
\usepackage{amsthm}
\usepackage{amscd}
\usepackage{verbatim}
\usepackage{tikz}
\usepackage[sc]{mathpazo}
\usepackage[T1]{fontenc} 
\usepackage[autostyle]{csquotes}
\usepackage{ytableau}
\usepackage{enumitem}
\usetikzlibrary{arrows.meta}

\usepackage[hidelinks]{hyperref}
\usepackage[nameinlink,noabbrev]{cleveref}

\tikzset{
vertex/.style={circle,draw,minimum size=1.5em},
edge/.style={->,> = latex'}
}

\usepackage{mathtools}

\theoremstyle{plain}
\newtheorem{theorem}{Theorem}[section]

\newtheorem{lemma}[theorem]{Lemma}

\newtheorem{conjecture}[theorem]{Conjecture}
\theoremstyle{definition}
\newtheorem{definition}[theorem]{Definition}

\newtheorem{remark}[theorem]{Remark}
\newtheorem{example}[theorem]{Example}

\numberwithin{equation}{section}

\DeclareMathAlphabet{\mathpzc}{OT1}{pzc}{m}{it}

\definecolor{MyBlue1}{RGB}{229, 119, 167}
\definecolor{MyBlue2}{RGB}{61, 101, 165}
\definecolor{MyBlue3}{RGB}{124, 161, 204}

\newcommand{\pr}{\mathbb{P}}

\newcommand{\norm}[1]{\| #1 \|}

\usepackage{marginnote}

\let\oldenumerate=\enumerate
\def\enumerate{
\oldenumerate
\setlength{\itemsep}{5pt}
}
\let\olditemize=\itemize
\def\itemize{
\olditemize
\setlength{\itemsep}{5pt}
}

\setlist[enumerate]{leftmargin=2.5em}

\begin{document}

\title[Stability of Khintchine-type inequalities via log-monotonicity]{Stability of Khintchine-type inequalities via log-monotonicity}
\author[Ch\'avez]{\'Angel Ch\'avez}
\author[Sheng]{Sam Sheng}

\address{Department of Mathematics and Computer Science, Davidson College, 209 Ridge Rd, Box 5000, Davidson, NC 28035} 
\email{anchavez@davidson.edu}

\address{Department of Economics, Columbia University, 1022 International Affairs Building, Mail Code 3308, 420 West 118th Street, New York, NY 10027} 
\email{zs2819@columbia.edu}

\begin{abstract}
We investigate Khintchine-type inequalities for the weighted sums $S=\sum_ka_kX_k$  of independent copies of a symmetric random variable $X$. We show how  log-monotonicity of the sequence $r_k(X)=k! \mathbb{E}[X^{2k}]/(2k)!$ implies sharp comparisons between the $L_p$ and $L_2$ norms of $S$ for every even integer $p\geq 2$, extending classic Khintchine-type inequalities and yielding new results in the log-convex setting. We also investigate the stability of our inequalities. Our first stability inequality sharpens the classic inequality by a deviation of the coefficient vector from the coordinate extremizers, while the second quantifies deviation from the Gaussian limit. Our results recover recent stability inequalities for random signs and apply to a broad class of distributions, including type-$\mathscr{L}$ random variables, ultra sub-Gaussian random variables and Gaussian mixtures. 
\end{abstract}

\maketitle

\section{Introduction}

\subsection{Foreword} The study of Khintchine inequalities was initiated by Aleksandr Khintchine \cite{Khintchine} and goes as follows. Suppose $\epsilon_1, \epsilon_2, \ldots, \epsilon_n$ are iid Rademacher random variables defined by $\pr(\epsilon_i=\pm 1)=\frac{1}{2}$. Consider the sum $S=\sum_ka_k\epsilon_k$, in which $a=(a_1,a_2, \ldots, a_n)\in \mathbb{R}^n$. Khintchine showed in \cite{Khintchine} that there exist constants $C_{p,q}>0$ such that for all $n\geq 1$ and coefficients $a\in \mathbb{R}^n$, we have 
\begin{align}
\norm{S}_p\leq C_{p,q}\norm{S}_q,\label{eq:Original}
\end{align} in which $\norm{\cdot}_p=(\mathbb{E}|\cdot|^p)^{\frac{1}{p}}$ denotes the $L_p$ norm. The problem of finding optimal constants $C_{p,q}$ has been studied for several decades and by several authors; see \cite{Figiel, Haagerup, Khintchine, Latala2, Mordhorst, Nayar1, Nazarov, Ole, Pinelis, Stechkin, Szarek,Whittle, Young}, for instance. In all known cases,
\begin{align*}
C_{p,q}=\min\big\{\textstyle\frac{\gamma_p}{\gamma_q}, \frac{\norm{Y}_p}{\norm{Y}_q}\big\},
\end{align*}where $\gamma_p$ denotes the $L_p$ norm of a standard Gaussian and $Y=\frac{1}{\sqrt{2}}(\epsilon_1+\epsilon_2)$.


In general, suppose $X_1, X_2, \ldots, X_n$ are iid copies of a random variable $X$. The study of Khintchine-type inequalities seeks to find a sharp comparison between the $L_p$ and $L_q$ norms of the sum $S=\sum_ka_kX_k$, where $a\in \mathbb{R}^n$. This problem has been studied for many distributions; see \cite{Eskenazis1, Eskenazis2, Havrilla2, Havrilla1, Latala1, Nayar1, Newman2, Newman1}, for instance. 

Sharp comparisons between the $L_p$ and $L_2$ norms of $S$ are recorded in Figure \ref{fig:Table1}, in which we denote $\eta_p=\norm{X}_p/\norm{X}_2$. The list is by no means exhaustive.

\begin{figure}[ht]
\begin{center}
\renewcommand{\arraystretch}{1.5}
\begin{tabular}{ |c|c|c|c| } 
\hline
$X$ & Optimal bounds & Regime & Source \\ 
\hline
$\mathcal{U}(-1,1)$& $\eta_p \norm{S}_2\leq\norm{S}_p\leq \gamma_p \norm{S}_2$& $p\geq 2$& \cite{Latala1} \\ 
Rademacher& $\eta_p \norm{S}_2\leq\norm{S}_p\leq \gamma_p \norm{S}_2$& $p\geq 2$& \cite{Haagerup} \\ 
Gaussian mixture& $\gamma_p\norm{S}_2\leq\norm{S}_p\leq \eta_p\norm{S}_2$ & $p\geq 2$ & \cite{Eskenazis1}\\ 
3-point symmetric& $\eta_p \norm{S}_2\leq\norm{S}_p\leq \gamma_p \norm{S}_2$ & $p\geq 3$& \cite{Havrilla1}\\ 
\hline
\end{tabular}
\end{center}
\caption{Sharp comparisons between the $L_p$ and $L_2$ norms of $S$.}
\label{fig:Table1}
\end{figure}

Our paper establishes new Khintchine-type inequalities of the above type. In particular, we obtain sharp comparisons between the $L_p$ and $L_2$ norms of the weighted sum $S=\sum_ka_kX_k$ for even $p\geq 2$. Furthermore, we investigate the stability of our inequalities. That is to say, we strengthen our inequalities by a defect which measures the distance of $a\in \mathbb{R}^n$ from the extremizer. We remark that the question about stability of Khintchine-type inequalities is relatively new but has already been addressed by several authors; see \cite{De, Eskenazis3, Eskenazis4, Jakimiuk, Jakimiuk2, Melbourne}.

\subsection{Szarek stability} 
For historical context, we recall the stability of the classic Khintchine inequality $\norm{S}_2\leq C_{2,1}\norm{S}_1$, where $S=\sum_ka_k\epsilon_k$. Recall the optimal constant appearing in \eqref{eq:Original} is $C_{2,1}=\frac{\norm{Y}_2}{\norm{Y}_1}$, where $Y=\frac{1}{\sqrt{2}}(\epsilon_1+\epsilon _2)$. This was originally established by Szarek in \cite{Szarek} and implies $\mathbb{E}|S|\geq \mathbb{E}|Y|$. The authors of \cite{De, Eskenazis3, Melbourne} sharpened Szarek's inequality by proving bounds of the form
\begin{align*}
\mathbb{E}|S|\geq \mathbb{E}|Y|+c\delta(a),
\end{align*}in which $\sum_ka_k^2=1$, $c>0$ is constant independent of $n$ and $\delta$ measures the distance of the vector $a\in \mathbb{R}^n$ from the two-point extremizers at $\frac{1}{\sqrt{2}}(\pm e_i\pm e_j)$.

\subsection{Coordinate stability} Our paper concerns symmetric distributions whose moment sequences, after normalization, are log-monotone. Log-monotonicity plays a central role in our paper. Recall a sequence of positive numbers $(r_k)$ is called \emph{log-concave} if $r_k^2\geq r_{k-1}r_{k+1}$ and \emph{log-convex} if $r_k^2\leq r_{k-1}r_{k+1}$. Log-monotone sequences arise often in algebra, combinatorics and geometry; see \cite{Branden, StanleyLog}.

\begin{definition}
Let $X$ be a symmetric random variable. Define the sequence
\begin{align*}
r_k(X)=\frac{ k!\mathbb{E}[X^{2k}]}{(2k)!}
\end{align*}for $k\geq 0$. For brevity, we denote $r_k=r_k(X)$ when there is no risk of confusion. 
\end{definition} 

We prove the following \emph{coordinate stability} theorem in Section \ref{sec:Robust1}. Observe equality is attained in Theorem \ref{thm:Robust1} by the one-point extremizers at $a=\pm e_i$.

\begin{theorem}\label{thm:Robust1}
Suppose $p=2q$ for some integer $q\geq 1$ and $(a_1, a_2, \ldots, a_n)\in \mathbb{R}^n$ satisfies $\sum_ka_k^2=1$. Furthermore, suppose $X$ is a symmetric random variable and $X_1, X_2, \ldots, X_n$ are iid copies of $X$. Let $r_k=r_k(X)$. Then $S=\sum_ka_kX_k$ satisfies the following bounds.
\begin{enumerate}
\item If $(r_k)$ is log-concave, then $r_1r_{q-1}-r_q\geq 0$ and
\begin{align*}
\norm{S}_p^p\geq \textstyle\Big(\frac{\norm{X}_p}{\norm{X}_2}\Big)^p\norm{S}_2^p+ \displaystyle 2^q\gamma_p^p(r_1r_{q-1}-r_{q})\Big(1- \sum_ka_k^p\Big);
\end{align*} 

\item If $(r_k)$ is log-convex, then $r_q-r_1r_{q-1}\geq 0$ and
\begin{align*}
\norm{S}_p^p\leq \textstyle\Big(\frac{\norm{X}_p}{\norm{X}_2}\Big)^p\norm{S}_2^p- \displaystyle 2^q\gamma_p^p(r_{q}-r_1r_{q-1})\Big(1- \sum_ka_k^p\Big).
\end{align*}  
\end{enumerate}Moreover, the constants are asymptotically sharp near the coordinate extremizers.
\end{theorem}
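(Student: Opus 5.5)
The plan is to expand $\mathbb{E}S^{p}$ with the multinomial theorem, rewrite every moment in terms of the $r_k$, and compare each off-diagonal product $\prod_i r_{k_i}$ with $r_1r_{q-1}$ using log-monotonicity.

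\emph{Step 1: moment expansion.} Since $X$ is symmetric, odd moments vanish, so
\begin{align*}
\mathbb{E}S^{2q}=\sum_{|k|=q}\frac{(2q)!}{\prod_i (2k_i)!}\prod_i a_i^{2k_i}\,\mathbb{E}X^{2k_i},
\end{align*}
where $k=(k_1,\ldots,k_n)$ ranges over multi-indices of nonnegative integers. Substituting $\mathbb{E}X^{2k}=(2k)!\,r_k/k!$ and $\gamma_p^p=(2q)!/(2^qq!)$, and writing $b_i=a_i^2$ (so $\sum_i b_i=1$), this becomes
\begin{align*}
\norm{S}_p^p=2^q\gamma_p^p\,F(b),\qquad F(b)=q!\sum_{|k|=q}\prod_i \frac{r_{k_i}b_i^{k_i}}{k_i!}.
\end{align*}
We also use $r_0=1$ and $\mathbb{E}X^2=2r_1$, so $\norm{S}_2^2=2r_1$. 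Hence
\begin{align*}
\Big(\tfrac{\norm{X}_p}{\norm{X}_2}\Big)^p\norm{S}_2^p=\frac{\mathbb{E}X^{2q}}{(2r_1)^q}(2r_1)^q=2^q\gamma_p^p\,r_q.
\end{align*}
So both parts of the theorem reduce to comparing $F(b)-r_q$ with $\pm(r_1r_{q-1}-r_q)\big(1-\sum_i b_i^q\big)$.

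\emph{Step 2: the combinatorial inequality.} Split $F$ into two groups of terms.
\begin{itemize}
\item The diagonal terms, where a single $k_i=q$, contribute exactly $r_q\sum_i b_i^q$.
\item The remaining terms have $m\ge 2$ nonzero parts. By the multinomial identity $q!\sum_{|k|=q}\prod_i b_i^{k_i}/k_i!=\big(\sum_i b_i\big)^q=1$, their combinatorial weights sum to $1-\sum_i b_i^q$.
\end{itemize}
It therefore suffices to show that in the log-concave case every such term satisfies $\prod_i r_{k_i}\ge r_1r_{q-1}$, with the inequality reversed in the log-convex case. Then
\begin{align*}
F(b)\ge r_q\sum_i b_i^q+r_1r_{q-1}\Big(1-\sum_i b_i^q\Big),
\end{align*}
which rearranges to part (1); part (2) follows in the same way with all inequalities reversed.

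To prove the product bound, use that log-concavity with $r_0=1$ means the ratios $r_{j+1}/r_j$ are nonincreasing. This gives two facts.
\begin{itemize}
\item For any $a,b\ge 1$ we have $r_ar_b\ge r_1r_{a+b-1}$, because pushing mass toward an extreme lowers the product. Merging parts pairwise in this way gives $\prod_i r_{k_i}\ge r_1^{m-1}r_{q-m+1}$.
\item The inequality $r_1r_j\ge r_0r_{j+1}=r_{j+1}$ holds for every $j$. Applying it repeatedly raises $r_1^{m-1}r_{q-m+1}$ to at least $r_1r_{q-1}$. The case $j=q-1$ also gives the sign claim $r_1r_{q-1}-r_q\ge 0$.
\end{itemize}
In the log-convex case every one of these inequalities reverses.

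\emph{Step 3: sharpness.} Take $b=(1-\varepsilon,\varepsilon,0,\ldots,0)$. To first order in $\varepsilon$, only the multi-indices $(q,0)$ and $(q-1,1)$ contribute, giving
\begin{align*}
F(b)=r_q(1-q\varepsilon)+q\,r_1r_{q-1}\varepsilon+O(\varepsilon^2),\qquad 1-\sum_i b_i^q=q\varepsilon+O(\varepsilon^2).
\end{align*}
So the ratio $\big(F(b)-r_q\big)\big/\big(1-\sum_i b_i^q\big)$ tends to $r_1r_{q-1}-r_q$, which shows the constant cannot be improved near $\pm e_i$.

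The main obstacle is Step 2, specifically the reduction of an arbitrary product $\prod_i r_{k_i}$ to the two-part extreme $r_1r_{q-1}$ using only the monotone-ratio property. This needs careful bookkeeping of the merging moves, and some care to use $r_0=1$ correctly, including the edge case $q=1$, where every term is diagonal and there is nothing to prove.
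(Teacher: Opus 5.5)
Your proposal is correct and follows the paper's proof essentially step for step. It uses the same multinomial expansion and the same split into the one-part terms $qe_i$ and the terms of length at least two. It identifies the defect weight $1-\sum_k a_k^p$ through the same multinomial identity, and it uses the same sharpness curve (your $\varepsilon$ is the paper's $t^2$). The only difference is how you prove $\prod_i r_{k_i}\ge r_1r_{q-1}\ge r_q$: you use explicit merging transfers, whereas the paper uses the majorization chain $\alpha\prec(q-1,1,0,\ldots,0)\prec(q,0,\ldots,0)$ together with Schur-concavity of $\alpha\mapsto\prod_i r_{\alpha_i}$ via Muirhead's lemma (Lemmas \ref{lem:Key0} and \ref{lem:Key1}), which amounts to the same elementary transfers.
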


\subsection{Gaussian stability} We prove the following \emph{Gaussian stability} theorem in Section \ref{sec:Robust2}. We remark that equality is attained in Theorem \ref{thm:Robust2} asymptotically by $a=(\frac{1}{\sqrt{n}}, \frac{1}{\sqrt{n}}, \ldots,\frac{1}{\sqrt{n}})$. In this case, the central limit theorem ensures that
\begin{align*}
\lim_{n\to \infty} \norm{S}_p=\gamma_p\norm{X}_2=\gamma_p\norm{S}_2.
\end{align*} 

\begin{definition}
Let $q\geq 2$ be an integer. For $a\in\mathbb{R}^n$ satisfying $\sum_ka_k^2=1$, define
\begin{align*}
0\leq \Delta_q(a)=\begin{cases} 1& n<q,\\
1-q!\binom{n}{q}\Big( \frac{1-\sum_ka_k^4}{n(n-1)}\Big)^{\frac{q}{2}}& n\geq q.
\end{cases}
\end{align*}
\end{definition}

\begin{theorem}\label{thm:Robust2}
Suppose $p=2q$ for some integer $q\geq 2$ and $(a_1, a_2, \ldots, a_n)\in \mathbb{R}^n$ satisfies $\sum_ka_k^2=1$. Furthermore, suppose $X$ is a symmetric random variable and $X_1, X_2, \ldots, X_n$ are iid copies of $X$. Let $r_k=r_k(X)$. Then $S=\sum_ka_kX_k$ satisfies the following bounds.
\begin{enumerate}
\item If $(r_k)$ is log-concave, then $r_1^2-r_2\geq 0$ and
\begin{align*}
\norm{S}_p^p\leq \gamma_p^p\norm{S}_2^p-2^q\gamma_p^pr_1^{q-2}(r_1^2-r_2)\Delta_q(a);
\end{align*} 

\item If $(r_k)$ is log-convex, then $r_2-r_1^2\geq 0$ and
\begin{align*}
\norm{S}_p^p\geq \gamma_p^p\norm{S}_2^p+2^q\gamma_p^pr_1^{q-2}(r_2-r_1^2)\Delta_q(a).
\end{align*} 
\end{enumerate} Moreover, the constants are asymptotically sharp near the Gaussian extremizer.
\end{theorem}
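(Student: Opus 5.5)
We expand $\mathbb{E}[S^{2q}]$ with the multinomial theorem. Because $X$ is symmetric, only even powers survive, so
\begin{align*}
\norm{S}_p^p=\sum_{k_1+\cdots+k_n=q}\frac{(2q)!}{(2k_1)!\cdots(2k_n)!}\prod_i a_i^{2k_i}\mathbb{E}[X^{2k_i}]
=\frac{(2q)!}{q!}\sum_{k_1+\cdots+k_n=q}\binom{q}{k_1,\ldots,k_n}\prod_i a_i^{2k_i}r_{k_i}.
\end{align*}
Note that $\gamma_p^p=(2q)!/(2^q q!)$, so $(2q)!/q!=2^q\gamma_p^p$. Normalise $r_0=1$ and, after rescaling $X$, assume $\norm{X}_2=1$, i.e.\ $r_1=1/2$. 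Then $2^qr_1^q=1$, and the Gaussian term is
\begin{align*}
\gamma_p^p\norm{S}_2^p=2^q\gamma_p^p r_1^q\Big(\sum_i a_i^2\Big)^q=2^q\gamma_p^p\sum_{k}\binom{q}{k}\prod_i a_i^{2k_i}r_1^{q}.
\end{align*}
Hence, writing $x_i=a_i^2$ (so $\sum x_i=1$),
\begin{align*}
\gamma_p^p\norm{S}_2^p-\norm{S}_p^p=2^q\gamma_p^p\sum_{k_1+\cdots+k_n=q}\binom{q}{k}x^k\Big(r_1^q-\prod_i r_{k_i}\Big).
\end{align*}
This is homogeneous in the scaling of $X$, because both $r_1^q$ and $\prod_i r_{k_i}$ scale like $\norm{X}_2^{2q}$. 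So the normalisation costs nothing, and the general statement follows by keeping the factor $r_1^{q-2}$ explicit.

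In the log-concave case I bound each coefficient $r_1^q-\prod_i r_{k_i}$ from below. Log-concavity with $r_0=1$ gives $r_ar_b\le r_1r_{a+b-1}$ for $a,b\ge1$, since it is a majorisation statement. Iterating this gives $\prod_i r_{k_i}\le r_1^{m-1}r_{q-m+1}$, where $m$ is the number of nonzero $k_i$. It also gives $r_{j}\le r_1 r_{j-1}$, so $r_j\le r_1^j$ and the sequence $r_j/r_1^j$ is nonincreasing. Consequently every multi-index $k$ that is not all ones (not $k_i\in\{0,1\}$) has some $k_i\ge2$, and for it
\begin{align*}
\prod_i r_{k_i}\le r_1^{q-2}r_2,
\end{align*}
obtained by splitting off a factor $r_2$ and bounding the rest by powers of $r_1$. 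Hence $r_1^q-\prod_i r_{k_i}\ge r_1^{q-2}(r_1^2-r_2)\ge0$ for those $k$. For the all-ones indices the coefficient is exactly $0$. It follows that
\begin{align*}
\gamma_p^p\norm{S}_2^p-\norm{S}_p^p\ge 2^q\gamma_p^pr_1^{q-2}(r_1^2-r_2)\sum_{k\ \text{not all }\le1}\binom{q}{k}x^k
=2^q\gamma_p^pr_1^{q-2}(r_1^2-r_2)\Big(1-q!\,e_q(x)\Big),
\end{align*}
where $e_q$ is the elementary symmetric polynomial and we used $\sum_k\binom{q}{k}x^k=1$. The log-convex case is identical with all inequalities reversed: $r_ar_b\ge r_1r_{a+b-1}$ and $r_j/r_1^j$ is nondecreasing.

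It remains to show $1-q!e_q(x)\ge\Delta_q(a)$, i.e.\ $q!\,e_q(x)\le q!\binom nq\big(\tfrac{1-\sum x_i^2}{n(n-1)}\big)^{q/2}$. If $n<q$ then $e_q=0$ and there is nothing to prove. Otherwise, note $2e_2(x)=1-\sum x_i^2$, and apply Maclaurin's inequality $\big(e_q/\binom nq\big)^{1/q}\le\big(e_2/\binom n2\big)^{1/2}$, valid for nonnegative $x$. This yields exactly $e_q\le\binom nq\big(\frac{1-\sum x_i^4\cdot}{n(n-1)}\big)^{q/2}$ with $\sum x_i^2=\sum a_i^4$, matching the definition of $\Delta_q$. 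The same inequality also gives $\Delta_q\ge0$.

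For sharpness near the Gaussian extremizer, I would take $a=n^{-1/2}(1,\ldots,1)$. There the terms with exactly one $k_i=2$ dominate the defect at order $1/n$, and $\Delta_q$ is asymptotically $\binom q2/n$, matching the leading term $2^q\gamma_p^pr_1^{q-2}(r_1^2-r_2)\binom q2\sum_i x_i^2$.

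The main obstacle is the coefficientwise bound $\prod_i r_{k_i}\le r_1^{q-2}r_2$ for multi-indices with some $k_i\ge 2$, which rests on the monotonicity of $r_j/r_1^{j}$. Log-concavity with $r_0=1$ gives $r_{j}/r_{j-1}$ nonincreasing, so $r_j/r_{j-1}\le r_1/r_0=r_1$, and hence $r_j\le r_1^{j-2}r_2$ for $j\ge2$, since each ratio $r_i/r_{i-1}$ with $i\ge3$ is at most $r_2/r_1\le r_1$. With one part $k_{i_0}\ge2$ and the others bounded by $r_1^{k_i}$, this gives the claim. I would verify this chain carefully, since it is where log-monotonicity enters.
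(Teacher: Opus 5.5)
Your argument is correct and is essentially the paper's proof. It uses the same multinomial expansion and the same coefficientwise comparison of $\gamma_p^p\norm{S}_2^p-\norm{S}_p^p$ with $r_1^q$. It makes the same observation that indices with all $k_i\in\{0,1\}$ contribute nothing, so the remaining weight is $1-q!\,e_q(a_1^2,\ldots,a_n^2)$. It finishes with the same Maclaurin step.

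The one genuine difference is how you obtain the key bound $\prod_i r_{k_i}\le r_1^{q-2}r_2$ when some $k_i\ge 2$. The paper deduces it from Schur-concavity of $\alpha\mapsto\prod_i r_{\alpha_i}$ (via Muirhead transfers) together with the majorization $(2,1,\ldots,1,0,\ldots,0)\prec\alpha$. You instead split off $r_{k_{i_0}}\le r_1^{k_{i_0}-2}r_2$ and bound every other factor by $r_1^{k_i}$. Your route is shorter and self-contained. It also only uses $r_j\le r_1r_{j-1}$ for $j\ge 2$ (reversed in the log-convex case), which is weaker than log-concavity. The paper's route costs an extra lemma, but that lemma is reused for Theorem~\ref{thm:Robust1}. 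There the bound $\prod_i r_{\alpha_i}\ge r_1r_{q-1}$ needs more than this first-ratio comparison.

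One correction: your side remark that log-concavity gives $r_ar_b\le r_1r_{a+b-1}$ for $a,b\ge 1$ is backwards. Since $(a,b)\prec(a+b-1,1)$ and the product is Schur-concave, log-concavity gives $r_ar_b\ge r_1r_{a+b-1}$; for $a=b=2$ this is literally $r_2^2\ge r_1r_3$. Hence the iterated bound $\prod_i r_{k_i}\le r_1^{m-1}r_{q-m+1}$ is false as well, and so are the log-convex analogues you state. Your final argument never uses these claims, so just delete them. Do not try to reach the key bound by iterating them and then bounding $r_{q-m+1}$.

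Also, $\Delta_q\ge 0$ does not follow from the $e_q$-versus-$e_2$ Maclaurin inequality you invoke. It needs $\sum_k a_k^4\ge\frac{1}{n}$ together with $q!\binom{n}{q}\le n^q$, as in the paper. It plays no role in the inequality itself, however.
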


We remark that $\Delta_q(a)\geq \sum_ka_k^4$ for all $n$. This bound is proved in Section \ref{sec:Robust2}. If $(r_k)$ is log-concave, then for all $n$ we must have
\begin{align*}
\norm{S}_p^p\leq \gamma_p^p\norm{S}_2^p-2^q\gamma_p^pr_1^{q-2}(r_1^2-r_2)\sum_ka_k^4.
\end{align*}Jakimiuk recently proved in \cite{Jakimiuk} that there exist constants $C_p>0$ for which 
\begin{align}
\norm{\sum_ka_k\epsilon_k}_p^p\leq \gamma_p^p-C_p\sum_ka_k^4\label{eq:Jakimiuk}
\end{align} for any $p\geq 4$ and vector $a\in \mathbb{R}^n$ satisfying $\sum_ka_k^2=1$. If $\epsilon$ is a Rademacher random variable, then $\mathbb{E}[\epsilon^{2k}]=1$ for all $k\geq 0$, which implies $r_k(\epsilon)=\frac{k!}{(2k)!}$ is log-concave. Therefore, Theorem \ref{thm:Robust2} implies \eqref{eq:Jakimiuk} for even integers $p$ with
\begin{align*}
C_p&=2^{\frac{p}{2}}\gamma_p^pr_1^{\frac{p}{2}-2}(r_1^2-r_2)\\
&=\textstyle2^{\frac{p}{2}}\gamma_p^p(\frac{1}{2})^{\frac{p}{2}-2}(\frac{1}{4}-\frac{1}{12})\\
&=\textstyle \frac{2}{3}\gamma_p^p.
\end{align*} 

Finally, we remark that the investigation of stability of Khintchine-type inequalities has been largely restricted to the Rademacher case. In fact, \cite{Jakimiuk2} appears to be among the first papers to consider other distributions. In particular, the authors of \cite{Jakimiuk2} sharpen moment comparison inequalities for sums of random vectors uniformly distributed on the $d$-dimensional Euclidean sphere $S^d\subset \mathbb{R}^{d+1}$. If $d=0$, then this reduces to the Rademacher case. To the best of our knowledge, our paper is also among the first investigations of stability phenomena for Khintchine-type inequalities beyond the Rademacher setting.

\subsection{Classic inequalities} We now state classic comparisons between the $L_p$ and $L_2$ norms of $S=\sum_ka_kX_k$. The following theorem, which nicely reflects the behavior seen in Figure \ref{fig:Table1}, follows immediately from Theorems \ref{thm:Robust1} and \ref{thm:Robust2}. We remark that an alternative proof of the following theorem is given in Section \ref{sec:Khintchine}.

\begin{theorem}\label{thm:Main}
Let $X$ be a symmetric random variable, $X_1, X_2, \ldots, X_n$ iid copies of $X$ and $r_k=r_k(X)$. Let $(a_1, a_2, \ldots, a_n)\in\mathbb{R}^n$ and $p\geq 2$ be an even integer. Then the following bounds hold for the sum $S=\sum_ka_kX_k$. Moreover, the constants are optimal.
\begin{enumerate}
\item If $(r_k)$ is log-concave, then $\big( \frac{\norm{X}_p}{\norm{X}_2}\big) \norm{S}_2\leq \norm{S}_p\leq \gamma_p\norm{S}_2$;\\

\item If $(r_k)$ is log-convex, then $\gamma_p\norm{S}_2\leq \norm{S}_p\leq \big( \frac{\norm{X}_p}{\norm{X}_2}\big) \norm{S}_2$.
\end{enumerate}
\end{theorem}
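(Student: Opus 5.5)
The plan is to deduce Theorem \ref{thm:Main} from Theorems \ref{thm:Robust1} and \ref{thm:Robust2} by dropping the nonnegative defect terms, and then to check optimality of the constants separately. By homogeneity of both sides in $a$, and since the statement is trivial when $a=0$, I will assume $\sum_ka_k^2=1$. Independence and symmetry then give $\norm{S}_2^2=\mathbb{E}[X^2]\sum_ka_k^2=\mathbb{E}[X^2]$. The case $p=2$ is an identity, because $\gamma_2=1=\norm{X}_2/\norm{X}_2$. So I take $p=2q$ with $q\geq 2$.

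\emph{Log-concave case.} Theorem \ref{thm:Robust1}(1) gives $\norm{S}_p^p\geq (\norm{X}_p/\norm{X}_2)^p\norm{S}_2^p+2^q\gamma_p^p(r_1r_{q-1}-r_q)(1-\sum_ka_k^p)$. It also gives $r_1r_{q-1}-r_q\geq 0$. Moreover $\sum_ka_k^p\leq (\max_k a_k^2)^{q-1}\sum_ka_k^2\leq 1$, so $1-\sum_ka_k^p\geq 0$ and the defect is nonnegative. Dropping it and taking $p$-th roots gives the lower bound. For the upper bound, Theorem \ref{thm:Robust2}(1) gives $\norm{S}_p^p\leq\gamma_p^p\norm{S}_2^p-2^q\gamma_p^pr_1^{q-2}(r_1^2-r_2)\Delta_q(a)$. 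Here $r_1=\mathbb{E}[X^2]/2>0$ (assuming $X\neq 0$), $r_1^2-r_2\geq 0$ by that theorem, and $\Delta_q(a)\geq 0$ by its definition. Dropping this defect gives $\norm{S}_p\leq\gamma_p\norm{S}_2$.

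\emph{Log-convex case.} This is symmetric: use parts (2) of both theorems, whose defect signs are reversed. Again $r_q-r_1r_{q-1}\geq0$, $r_2-r_1^2\geq 0$ and $\Delta_q(a)\geq 0$.

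\emph{Optimality.} The constant $\norm{X}_p/\norm{X}_2$ is attained at $a=e_1$, since then $S=X_1$. The constant $\gamma_p$ is approached by $a=(1/\sqrt n,\dots,1/\sqrt n)$ as $n\to\infty$. For this I will invoke the central limit theorem together with convergence of moments. For even $p$ this convergence is elementary: expand $\mathbb{E}[S^p]$ multinomially and observe that only pairings survive in the limit. One needs $\mathbb{E}[X^p]<\infty$, which is implicit because $r_q$ is defined.

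The only points needing care are confirming that each dropped defect really has the right sign, and the moment convergence used for optimality. Both are routine, so I do not expect a genuine obstacle here; the substance lies in the two earlier theorems being invoked.
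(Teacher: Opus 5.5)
Your proposal is correct. It is the route the paper has in mind when it says Theorem~\ref{thm:Main} follows immediately from Theorems~\ref{thm:Robust1} and~\ref{thm:Robust2}: drop the nonnegative defects, and get sharpness from $a=e_1$ and from the equal-weight vectors via the CLT with convergence of even moments. One small correction: the nonnegativity of $\Delta_q(a)$ is proved in Lemma~\ref{lem:Power4}, not given by the definition. The paper also gives an alternative proof in Section~\ref{sec:Khintchine}, which deduces the bounds from the Schur-monotone property (Theorem~\ref{thm:Main2}) and the majorizations $(\frac1n,\dots,\frac1n)\prec(a_1^2,\dots,a_n^2)\prec(1,0,\dots,0)$.
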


Log-monotonicity in the study of Khintchine inequalities is not new. Indeed, a symmetric random variable $X$ is \emph{ultra sub-Gaussian} if the sequence $(2^kr_k)$ is log-concave. These random variables were introduced by Nayar and Oleszkiewicz in \cite{Nayar1}. Observe $(r_k)$ is log-concave if $(2^kr_k)$ is log-concave. Therefore, ultra sub-Gaussian distributions satisfy the bounds in Theorems \ref{thm:Robust1}, \ref{thm:Robust2} and \ref{thm:Main}.

Following \cite{Newman1}, a symmetric random variable $X$ is of \emph{type-$\mathscr{L}$} if there exist constants $c,c'>0$ for which $|\mathbb{E}[e^{zX}]|\leq c\exp(c'|z|^2)$ for all $z\in \mathbb{C}$ and $\mathbb{E}[e^{zX}]$ is even with only purely imaginary zeros. Type-$\mathscr{L}$ random variables are shown to be ultra sub-Gaussian in \cite{Havrilla2}. Consequently, type-$\mathscr{L}$ random variables also satisfy the bounds in Theorems \ref{thm:Robust1}, \ref{thm:Robust2} and \ref{thm:Main}. We remark that the upper bound in part (a) of Theorem \ref{thm:Main} was established for type-$\mathscr{L}$ random variables by Newman \cite{Newman2,Newman1}. Additionally, Havrilla, Nayar and Tkocz recently proved in \cite{Havrilla2} that $\norm{S}_p\leq \big(\frac{\gamma_p}{\gamma_q}\big)\norm{S}_q$ for all even integers $2\leq q\leq p$. Their result holds for type-$\mathscr{L}$ random variables where the symmetry condition on $\mathbb{E}[e^{zX}]$ is omitted.

The authors believe the lower bound in (a) of Theorem \ref{thm:Main} and the entirety of part (b) of Theorem \ref{thm:Main} to be new results. We remark that the machinery used in \cite{Havrilla2, Nayar1, Newman2, Newman1} does not appear to apply in the log-convex case. As such, these references do not contain any examples of distributions $X$ for which $r_k(X)$ is log-convex. We give some examples of such distributions in Section \ref{sec:Examples}.

\subsection{Outline of the paper} Section \ref{sec:Monotone} introduces the Schur-monotone property of distributions. We show this property is enjoyed by distributions whose sequences $(r_k)$ are log-monotone. Historically, the Schur-monotone property for distributions has been used as a mechanism for deriving Khintchine-type inequalities. Section \ref{sec:Khintchine} describes this mechanism, which provides an alternate proof of Theorem \ref{thm:Main}. In Section \ref{sec:Examples}, we give examples of distributions for which $(r_k)$ is log-convex. We prove Theorem \ref{thm:Robust1} in Section \ref{sec:Robust1} and Theorem \ref{thm:Robust2} in Section \ref{sec:Robust2}. Finally, we pose a diagonal stability conjecture in Section \ref{sec:Diagonal}. 

Figure \ref{fig:Diagram1}, which is shown below, details the structure of our paper and shows how each concept is connected. Log-monotonocity serves as our cornerstone.

\begin{figure}[ht]
\begin{center}
\begin{tikzpicture}[
	scale=0.9,
      transform shape,
    >={Latex},
    box/.style={
        draw,
        rectangle,
        minimum width=2.8cm,
        minimum height=1cm,
        align=center
    },
    arrowlabel/.style={
        font=\footnotesize,
        fill=white,
        inner sep=1pt
    }
]

\node[box] (top)    at (0,2.5)  {The Schur-monotone\\ property of $X$};
\node[box] (right)  at (4,0)    {Sharp Khintchine\\ inequalities};
\node[box] (bottom) at (0,-2.5) {Stability phenomena};
\node[box] (left)   at (-4,0)   {Log-monotonicity\\ of $r_k(X)$};

\draw[->, thick]
(left) -- node[midway, below left, arrowlabel]
{Theorems \ref{thm:Robust1}, \ref{thm:Robust2}} (bottom);

\draw[->, thick]
(left) -- node[midway, above, arrowlabel]
{Theorem \ref{thm:Main}} (right);

\draw[->, thick]
(top) -- node[midway, above right, arrowlabel]
{Section \ref{sec:Khintchine} (folklore)} (right);

\draw[->, thick]
(bottom) -- node[midway, below right, arrowlabel]
{Immediate} (right);

\draw[->, thick]
(left) -- node[midway, above left, arrowlabel]
{Theorem \ref{thm:Main2}} (top);
\end{tikzpicture}
\end{center}
\caption{The structural diagram of our underlying concepts.}
\label{fig:Diagram1}
\end{figure}

\section{The Schur-monotone property}\label{sec:Monotone}

We first recall the majorization partial order on $\mathbb{R}^n$. Suppose $x, y\in \mathbb{R}^n$ and denote by $\widetilde{x} = (\widetilde{x}_1, \widetilde{x}_2,\dots , \widetilde{x}_n)$ the decreasing rearrangement of $x$. We say $x$ is \emph{majorized} by $y$ if $\sum_{j=1}^k \widetilde{x}_j\leq \sum_{j=1}^k \widetilde{y}_j$ for $k=1, 2, \ldots, n$ with equality when $k=n$. Moreover, we write $x\prec y$ whenever $x$ is majorized by $y$. This section studies distributions that satisfy a ``Schur-monotone property,'' which we define below.

\begin{definition}
Fix $p\geq 1$, and let $a,b\in \mathbb{R}^n$. Let $X$ be a random variable and $X_1, X_2, \ldots, X_n$ iid copies of $X$. We say $X$ has the \emph{Schur-concave property for $p$} if 

\begin{align*}
(a_1^2, a_2^2,\ldots, a_n^2)\prec (b_1^2, b_2^2, \ldots, b_n^2)\Longrightarrow \norm{ \sum_i a_i X_i}_p\geq \norm{ \sum_i b_i X_i}_p,
\end{align*}in which $\norm{\cdot}_p=(\mathbb{E}|\cdot |^p)^{\frac{1}{p}}$. We say $X$ has the \emph{Schur-convex property for $p$}\footnote{The implication $a\prec b\Longrightarrow \norm{ \sum_i a_i X_i}_p\leq \norm{ \sum_i b_i X_i}_p$ always holds for $p\geq 1$; see \cite{Chavez1}, for example.} if 

\begin{align*}
(a_1^2, a_2^2,\ldots, a_n^2)\prec (b_1^2, b_2^2, \ldots, b_n^2)\Longrightarrow \norm{ \sum_i a_i X_i}_p\leq \norm{ \sum_i b_i X_i}_p.
\end{align*}
\end{definition}

Historically, the interest in the Schur-monotone property lies in its connection with Khintchine inequalities for the underlying distribution. We describe this connection in Section \ref{sec:Khintchine}. Indeed, several distributions are known to have the Schur-monotone property. For instance, suppose $X$ is uniformly distributed on $[-1, 1]$. A classic result due to Lata\l a and Oleszkiewicz \cite{Latala1} ensures that $X$ satisfies the Schur-concave property for $p\geq 2$ and the Schur-convex property for $1\leq p\leq 2$. This result of Lata\l a and Oleszkiewicz has been generalized to random vectors uniformly distributed on Euclidean balls by Eskenazis, Nayar and Tkocz \cite{Eskenazis1}. If $X$ is a Gaussian mixture, then $X$ satisfies the Schur-concave property for $-1<p\leq 2$ and the Schur-convex property for $p\geq 2$. This result is also due to Eskenazis, Nayar and Tkocz \cite{Eskenazis1}. Finally, let $q\in [0, \frac{1}{2}]$ and $X$ be a random variable satisfying $\pr(X=0)=q$ and $\pr(X=\pm 1)=\frac{1-q}{2}$. A result due to Havrilla and Tkocz \cite{Havrilla1} ensures $X$ satisfies the Schur-concave property for $p\geq 3$.

\subsection{Linear combinations}

Our next theorem ensures the Schur-monotone property is preserved under linear combinations. We start with a lemma.

\begin{lemma}\label{maj_linear}
Suppose $a,b\in\mathbb R^n$ satisfy $a \prec b$. Given scalars $\ell_1,\ell_2\dots,\ell_k \in\mathbb{R}$, define $a',b'\in \mathbb{R}^{kn}$ by $a'=(\ell_1a,\ell_2a,\dots,\ell_ka)$ and $(\ell_1b,\ell_2b,\dots,\ell_kb)$. Then $a'\prec b'$.
\end{lemma}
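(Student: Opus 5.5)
Since $a\prec b$, the Hardy–Littlewood–Pólya theorem (equivalently Birkhoff's theorem) gives a doubly stochastic $n\times n$ matrix $P$ with $a=Pb$. I will lift this to $\mathbb{R}^{kn}$. Set $Q=I_k\otimes P$, the $kn\times kn$ block-diagonal matrix with $k$ copies of $P$ on the diagonal.

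The first step is to check that $Q$ is doubly stochastic. Its entries are nonnegative, and every row and every column meets exactly one copy of $P$ and is otherwise zero, so all row and column sums equal $1$.

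The second step is to check that $Qb'=a'$. The $j$-th block of $Qb'$ is $P(\ell_j b)=\ell_j Pb=\ell_j a$, so $Qb'=(\ell_1a,\dots,\ell_ka)=a'$. The converse direction of Hardy–Littlewood–Pólya (if $x=Qy$ with $Q$ doubly stochastic, then $x\prec y$) then gives $a'\prec b'$. Here $b'$ denotes $(\ell_1b,\ell_2b,\dots,\ell_kb)$.

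Alternatively, one can avoid matrices and use the convex-function characterization: $x\prec y$ if and only if $\sum\phi(x_i)\le\sum\phi(y_i)$ for every convex $\phi:\mathbb{R}\to\mathbb{R}$. Given a convex $\phi$, each map $t\mapsto\phi(\ell_j t)$ is convex. Applying the characterization to $a\prec b$ gives $\sum_i\phi(\ell_j a_i)\le\sum_i\phi(\ell_j b_i)$ for each $j$, and summing over $j$ gives $\sum\phi(a'_m)\le\sum\phi(b'_m)$. Hence $a'\prec b'$.

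The only real point is that the scalars $\ell_j$ may be negative, which reverses the order of coordinates within a block. Working directly with partial sums of decreasing rearrangements across several differently scaled blocks would be awkward. Both routes above sidestep this, since neither a doubly stochastic representation nor convexity depends on the ordering or on signs. I would write the convex-function argument as the main proof, because it is cleanest.
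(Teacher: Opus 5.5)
Your proposal is correct. Your first argument is exactly the paper's proof: the paper takes a doubly stochastic $P$ with $a=Pb$ from Hardy--Littlewood--P\'olya, forms $Q=\operatorname{diag}(P,\dots,P)$, observes that $Q$ is doubly stochastic with $a'=Qb'$, and concludes $a'\prec b'$.

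The convex-function argument, which you say you would write up as the main proof, is a genuinely different route (though closely related), and it is also valid. The function $t\mapsto\phi(\ell_j t)$ is convex for every real $\ell_j$. So the inequality $\sum_i\phi(\ell_j a_i)\le\sum_i\phi(\ell_j b_i)$ follows from $a\prec b$ itself, with no need to re-sort the scaled blocks. Summing over $j$ and testing with $\phi(t)=\pm t$ and $\phi(t)=(t-c)^+$ then returns $a'\prec b'$.

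The difference between the two routes is which half of the classical equivalence carries the weight.
\begin{itemize}
\item The paper's route rests on the harder existence direction of Hardy--Littlewood--P\'olya, namely producing $P$ from $a\prec b$, for instance via $T$-transforms.
\item Your convex route needs only the implication ``$x\prec y\Rightarrow\sum\phi(x_i)\le\sum\phi(y_i)$'' (Karamata's inequality, provable directly by Abel summation) together with the elementary converse via the test functions above.
\end{itemize}
Your route also makes visible that the lemma amounts to two closure properties of majorization: invariance under multiplication by any real scalar, and invariance under concatenation of majorized pairs. If both characterization theorems are taken as black boxes, as the paper does, the two proofs are equally short.
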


\begin{proof}
The Hardy-Littlewood-P\'olya theorem \cite{Marshall} states that a vector $a\prec b$ if and only if there exists a doubly stochastic matrix $P$ such that $a= Pb$. Therefore, $a=Pb$ for some doubly stochastic matrix $P$. Define the block-diagonal matrix
\begin{align*}
Q=\operatorname{diag}(P,P\dots,P)
\end{align*}
with $k$ copies of $P$ on the diagonal. Since $P$ is doubly stochastic, so is $Q$. Moreover,
\begin{align*}
a' = Qb'.
\end{align*}
Thus, by the Hardy-Littlewood-P\'olya theorem, $a'\prec b'$.
\end{proof}

\begin{theorem}\label{thm:Main1}
Let $p\geq 1$, $X$ a symmetric random variable and $X_1, X_2, \ldots, X_k$ iid copies of $X$. Furthermore, suppose $\alpha_1, \alpha_2, \ldots, \alpha_k\in \mathbb{R}$.
\begin{enumerate}
\item If $X$ has the Schur-concave property for $p$, then so does $Y=\sum_i\alpha_iX_i$;\\

\item If $X$ has the Schur-convex property for $p$, then so does $Y=\sum_i\alpha_iX_i$.
\end{enumerate}
\end{theorem}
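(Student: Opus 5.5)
The plan is to realize a weighted sum of iid copies of $Y$ as a weighted sum of iid copies of $X$, and then apply the Schur-monotone property of $X$ to the longer coefficient vectors built in Lemma \ref{maj_linear}. Fix $n$ and $a,b\in\mathbb{R}^n$ with $(a_1^2,\ldots,a_n^2)\prec(b_1^2,\ldots,b_n^2)$. Let $Y_1,\ldots,Y_n$ be iid copies of $Y$. We may realize them as $Y_j=\sum_{i=1}^k\alpha_iX_{i,j}$, where $(X_{i,j})_{1\le i\le k,\,1\le j\le n}$ is an array of $kn$ iid copies of $X$. Then
\begin{align*}
\sum_{j=1}^n a_jY_j=\sum_{i=1}^k\sum_{j=1}^n \alpha_i a_j X_{i,j},
\end{align*}
and similarly for $b$. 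Hence $\sum_j a_jY_j$ is a weighted sum of $kn$ iid copies of $X$ with coefficient vector $a'=(\alpha_1a,\alpha_2a,\ldots,\alpha_ka)\in\mathbb{R}^{kn}$. Likewise $\sum_j b_jY_j$ has coefficient vector $b'=(\alpha_1b,\ldots,\alpha_kb)$. The $L_p$ norms depend only on the joint law, so they coincide with those computed from any other realization of the $Y_j$.

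Next we check the majorization hypothesis at the level of squares. The coordinate-wise squares are
\begin{align*}
(a')^2=(\alpha_1^2a^2,\ldots,\alpha_k^2a^2), \qquad (b')^2=(\alpha_1^2b^2,\ldots,\alpha_k^2b^2),
\end{align*}
where $a^2=(a_1^2,\ldots,a_n^2)$. Since $a^2\prec b^2$, Lemma \ref{maj_linear} with scalars $\ell_i=\alpha_i^2$ gives $(a')^2\prec(b')^2$ in $\mathbb{R}^{kn}$.

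Now we apply the hypothesis on $X$ with $kn$ variables. In case (a), the Schur-concave property for $p$ gives $\norm{\sum_j a_jY_j}_p\ge\norm{\sum_j b_jY_j}_p$. In case (b), the Schur-convex property gives the reverse inequality. Since $n$, $a$ and $b$ were arbitrary, $Y$ inherits the corresponding property. The symmetry assumption plays no real role here beyond ensuring that $Y$ is again symmetric and so fits the setting.

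I expect no substantial obstacle. The one point requiring care is that the Schur properties are stated for each fixed dimension, and we need them for $X$ in dimension $kn$. This is fine because the definition quantifies over all $n$ and all coefficient vectors. Zero coefficients $\alpha_i=0$ are harmless, since they merely contribute zero entries, which Lemma \ref{maj_linear} already allows.
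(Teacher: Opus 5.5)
Your proposal is correct and follows essentially the same route as the paper's proof. You rewrite $\sum_j a_jY_j$ as a weighted sum of $kn$ iid copies of $X$ with coefficient vector $(\alpha_1a,\ldots,\alpha_ka)$, transfer the majorization of squares via Lemma \ref{maj_linear} with $\ell_i=\alpha_i^2$, and then apply the Schur property of $X$ in dimension $kn$. Your explicit array realization $Y_j=\sum_i\alpha_iX_{i,j}$ just makes the distributional identity the paper asserts a little more transparent.
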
 

\begin{proof}
Assume $X$ satisfies the Schur-concave property for $p\geq 1$. Consider 
\begin{align*}
Y=\sum_{j=1}^k \alpha_j X_j,
\end{align*} in which $\alpha_1, \alpha_2, \ldots, \alpha_k\in\mathbb{R}$ and $X_1, X_2, \ldots, X_k$ are iid copies of $X$. Define the vector $u = (\alpha_1 a,\alpha_2 a,\dots,\alpha_k a)\in\mathbb R^{kn}$ given $a=(a_1, a_2\dots,a_n)\in\mathbb R^n$. Observe that the sums $\sum_{i=1}^n a_i Y_i$ and $\sum_{r=1}^{kn} u_r X_r$ have the same distribution. Similarly, we define $v= (\alpha_1 b,\alpha_2 b,\dots,\alpha_k b)\in\mathbb R^{kn}$ given $b=(b_1, b_2, \ldots, b_n)\in \mathbb{R}^n$. Then the sums $\sum_{i=1}^n b_i Y_i$ and $\sum_{r=1}^{kn} v_r X_r$ have the same distribution. 

Suppose $(a_1^2,a_2^2\ldots,a_n^2)\prec (b_1^2, b_2^2\ldots,b_n^2)$. Let $\ell_j=\alpha_j^2$ in Lemma \ref{maj_linear} to conclude $(u_1^2, u_2^2, \ldots, u_{kn}^2)\prec (v_1^2, v_2^2,\ldots, v_{kn}^2)$. The Schur-concave property of $X$ implies
\begin{align*}
\left\|\sum_{r=1}^{kn} \widetilde u_r X_r\right\|_p
\ge
\left\|\sum_{r=1}^{kn} \widetilde v_r X_r\right\|_p.
\end{align*}
Equivalently,
\begin{align*}
\left\|\sum_{i=1}^n a_i Y_i\right\|_p
\ge
\left\|\sum_{i=1}^n b_i Y_i\right\|_p.
\end{align*}
The inequalities are reversed if $X$ has the Schur-convex property.

\end{proof}

As an example of Theorem \ref{thm:Main1}, suppose $X$ follows a triangular distribution on $[-1, 1]$. Then we may write $X=\frac{1}{2}U_1+\frac{1}{2}U_2$, in which $U_1$ and $U_2$ are iid uniform random variables on $[-1,1]$. Theorem \ref{thm:Main2} and the result of Lata\l a and Oleszkiewicz, which is highlighted above, imply $X$ satisfies the Schur-concave property for $p\geq 2$ and the Schur-convex property for $1\leq p\leq 2$.

\subsection{Log-monotonicity} Our next theorem establishes the Schur-monotone property for even integers for symmetric distributions whose moment sequences, after normalization, are log-monotone. Before stating the theorem, we review some terminology about weak partitions and Schur-monotonicity of functions.

A \emph{weak partition} of $j$ into $n$ parts is a sequence $\alpha=(\alpha_1, \alpha_2, \ldots, \alpha_n)$ of nonnegative integers for which $|\alpha|=\sum_k\alpha_k=j$. The number of nonzero parts of $\alpha$ is called its \emph{length} and denoted $\ell(\alpha)$. The multinomial theorem ensures that
\begin{align*}
\Big(\sum_{k=1}^n a_kx_k\Big)^j=j!\sum_{|\alpha|=j} \prod_i \frac{a_i^{\alpha_i}x_i^{\alpha_i}}{\alpha_i !},
\end{align*}in which the sum is taken over all weak partitions of $j$ into $n$ parts.

Let $\Omega\subset \mathbb{R}^n$. A function $f:\Omega\to \mathbb{R}$ is \emph{Schur-convex} if $a\prec b$ implies $f(a)\leq f(b)$ for all $a,b\in \Omega$. We say $f$ is \emph{Schur-concave} if $-f$ is Schur-convex. The following lemma plays an important role for us and can be found, for instance, in \cite{Marshall}.

\begin{lemma}[Schur-Ostrowski criterion]\label{lem:Schur}
Let $\Omega\subset\mathbb{R}^n$ be convex. A continuously differentiable function $f:\Omega\to \mathbb{R}$ is Schur-convex if and only if for all $1\leq i, j,\leq n$, 
\begin{align*}
(a_i-a_j)\Big( \frac{\partial f}{\partial a_i}-\frac{\partial f}{\partial a_j}\Big)\geq 0.
\end{align*} 
\end{lemma}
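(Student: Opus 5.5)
This is the classical Schur–Ostrowski criterion, and I plan to prove both directions by restricting $f$ to line segments. The key observation is that majorization is generated by transfers between two coordinates, so everything reduces to a one-variable monotonicity statement along such a transfer.

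\emph{Necessity.} Assume $f$ is Schur-convex, and fix $a\in\Omega$ and indices $i\neq j$ with $a_i>a_j$. For small $t>0$, let $a(t)$ be $a$ with $a_i$ replaced by $a_i-t$ and $a_j$ replaced by $a_j+t$. Then $a(t)$ is obtained from $a$ by a Robin Hood transfer, so $a(t)\prec a$. If $a(t)$ lies in $\Omega$, Schur-convexity gives $f(a(t))\leq f(a)$. Differentiating at $t=0$ yields $-\partial_i f+\partial_j f\leq 0$, which is the required inequality. The case $a_i=a_j$ is trivial, and the case $a_i<a_j$ follows by swapping the roles of $i$ and $j$.

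Here I use the standing convention in \cite{Marshall}: $\Omega$ is a symmetric convex set, so transfers toward the average stay in $\Omega$. Convexity handles this, since $a(t)$ lies on the segment from $a$ to its transposition $a^{(ij)}$, provided $\Omega$ is permutation invariant. I would state that implicit assumption explicitly. It is also needed to make sense of "$a\prec b$ implies $f(a)\leq f(b)$", because $f$ must be symmetric.

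\emph{Symmetry of $f$.} Schur-convexity forces $f$ to be symmetric, because $a\prec Pa$ and $Pa\prec a$ for every permutation matrix $P$.

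\emph{Sufficiency.} This direction has two steps.
\begin{enumerate}
\item First I would deduce symmetry of $f$ from the derivative condition, or simply assume it, as in the standard statement.
\item By the Hardy--Littlewood--P\'olya theorem, if $a\prec b$ then $a$ can be reached from $b$ by a finite sequence of T-transforms. Each T-transform replaces $(b_i,b_j)$, with $b_i\geq b_j$, by $(b_i-s,\,b_j+s)$ for some $0\leq s\leq b_i-b_j$, and each intermediate vector stays in $\Omega$ by convexity and symmetry.
\end{enumerate}

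It therefore suffices to handle a single T-transform. Define $g(s)=f(b_1,\dots,b_i-s,\dots,b_j+s,\dots)$. Then
\[
g'(s)=-(\partial_i f-\partial_j f)\big|_{b(s)}.
\]
The $i$-th and $j$-th coordinates of $b(s)$ differ by $b_i-b_j-2s$. For $s<(b_i-b_j)/2$ this difference is positive, so the hypothesis gives $g'(s)\leq 0$. Hence $g$ is nonincreasing on $[0,(b_i-b_j)/2]$. For $s$ beyond the midpoint, symmetry of $f$ gives $g(s)=g(b_i-b_j-s)$, which reduces to the first half. Either way $g(s)\leq g(0)$, so $f$ does not increase along each T-transform, and chaining gives $f(a)\leq f(b)$.

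\emph{Main obstacle.} The main difficulty is bookkeeping rather than analysis. One must make the implicit hypotheses precise: $\Omega$ symmetric and $f$ symmetric. One must also justify the decomposition of majorization into finitely many T-transforms, via Muirhead / Hardy--Littlewood--P\'olya, which the paper already cites from \cite{Marshall}. Once those are in place, the calculus step is immediate.
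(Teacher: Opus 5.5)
The paper gives no proof of this lemma; it only cites \cite{Marshall}, where the Schur--Ostrowski theorem is stated for a \emph{symmetric} function on a symmetric convex domain. Your argument is correct once those two hypotheses are in force, and you are right that they are needed.

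However, drop the first option in your step (1). Symmetry of $f$ cannot be deduced from the derivative condition, so the lemma as printed is false in the ``if'' direction. Adding the hypothesis is therefore a genuine correction to the statement, not bookkeeping. For instance, on $\Omega=\mathbb{R}^2$ let $f(x,y)=h(x-y)$ with $h(v)=v^2$ for $v\geq 0$ and $h(v)=2v^2$ for $v<0$. Then $f$ is $C^1$, and with $v=x-y$ we have $(x-y)(\partial_x f-\partial_y f)=2vh'(v)\geq 0$. Yet $f(1,0)=1\neq 2=f(0,1)$, although $(1,0)\prec(0,1)\prec(1,0)$.

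In your proof, symmetry enters exactly at the reflection $g(s)=g(b_i-b_j-s)$ past the midpoint of a transfer. Without it, the derivative condition only controls $f$ along transfers that do not cross the midpoint, i.e.\ within a fixed ordering of the coordinates. For the paper's use in Theorem \ref{thm:Main2} this is harmless, since $H$ is a symmetric polynomial on the symmetric convex set $\mathbb{R}^n_{\geq 0}$.

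As for the route, you decompose $a\prec b$ into finitely many T-transforms and prove monotonicity along each by a one-variable argument. The standard proof in \cite{Marshall} avoids the decomposition lemma. After sorting both vectors decreasingly (this is where symmetry is used), it differentiates $f$ along the straight segment $z(t)=a+t(b-a)$, which stays in the convex cone of ordered vectors, and sums by parts:
\[
\frac{d}{dt}f(z(t))=\sum_{k=1}^{n-1}\Big(\sum_{i\le k}(b_i-a_i)\Big)\big(\partial_k f-\partial_{k+1}f\big)(z(t))\ge 0 .
\]
This uses only the partial-sum definition of majorization and the fact that $\partial_k f$ is nonincreasing in $k$ on ordered vectors. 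Your version is more hands-on but relies on the Muirhead/Hardy--Littlewood--P\'olya decomposition into T-transforms. The segment argument is shorter and needs nothing beyond the definition. Both use symmetry in an essential way.
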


\begin{theorem}\label{thm:Main2}
Let $X$ be a symmetric random variable and $r_k=r_k(X)$. 

\begin{enumerate}
\item If $(r_k)$ is log-concave, then $X$ has the Schur-concave property for any even $p\geq 2$.\\

\item If $(r_k)$ is log-convex, then $X$ has the Schur-convex property for any even $p\geq 2$.
\end{enumerate}
\end{theorem}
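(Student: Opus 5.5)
Set $p=2q$ and write $x_i=a_i^2$. Since $X$ is symmetric, expanding $S^{2q}$ with the multinomial theorem and taking expectations kills every term with an odd exponent. Using $\mathbb{E}[X^{2k}]=\frac{(2k)!}{k!}r_k$, we get
\begin{align*}
\mathbb{E}\Big[\Big(\sum_i a_iX_i\Big)^{2q}\Big]=(2q)!\sum_{|\beta|=q}\prod_i \frac{a_i^{2\beta_i}\mathbb{E}[X^{2\beta_i}]}{(2\beta_i)!}=(2q)!\sum_{|\beta|=q}\prod_i \frac{r_{\beta_i}}{\beta_i!}x_i^{\beta_i}.
\end{align*}
So $\norm{S}_p^p=F(x_1,\dots,x_n)$, where $F$ is a symmetric polynomial on the convex set $\Omega=[0,\infty)^n$. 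Since $t\mapsto t^{1/p}$ is increasing, part (a) reduces to showing that $F$ is Schur-concave on $\Omega$, and part (b) to showing that $F$ is Schur-convex. I would verify this with the Schur--Ostrowski criterion (Lemma \ref{lem:Schur}). By symmetry of $F$ it suffices to check the pair $(i,j)=(1,2)$.

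To handle $\partial_1F-\partial_2F$, separate the dependence on $x_1,x_2$ from the other variables:
\begin{align*}
F=(2q)!\sum_{m=0}^{q} G_{q-m}(x_3,\dots,x_n)\,H_m(x_1,x_2),\qquad H_m(x,y)=\sum_{s+t=m}c_sc_t\,x^sy^t,
\end{align*}
where $c_k=r_k/k!$ and $G_{q-m}\ge 0$ collects the remaining variables. Every $G_{q-m}$ is nonnegative on $\Omega$, so it is enough to show that for each $m$,
\begin{align*}
(x-y)\big(\partial_xH_m-\partial_yH_m\big)
\end{align*}
is $\le 0$ in the log-concave case and $\ge0$ in the log-convex case. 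Writing $\partial_xH_m-\partial_yH_m=\sum_{s+t=m-1}\big((s+1)c_{s+1}c_t-(t+1)c_tc_{t+1}\big)x^sy^t$ and noting $(s+1)c_{s+1}=r_{s+1}/s!$, the coefficient of $x^sy^t$ is
\begin{align*}
d_{s,t}=\frac{r_{s+1}r_t-r_tr_{t+1}\cdot\frac{s!}{t!}\cdot\frac{t!}{s!}}{s!\,t!}=\frac{r_{s+1}r_t-r_sr_{t+1}}{s!\,t!}.
\end{align*}
Here I have used $c_t/s!\cdot 1=r_t/(s!t!)$ and symmetrically for the second term. Now pair the terms $(s,t)$ and $(t,s)$, which gives $d_{t,s}=-d_{s,t}$. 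The derivative difference therefore becomes
\begin{align*}
\sum_{s>t,\ s+t=m-1} d_{s,t}\,(x^sy^t-x^ty^s).
\end{align*}
For $s>t$ and $x,y\ge0$, the factor $(x-y)(x^sy^t-x^ty^s)=(xy)^t(x-y)(x^{s-t}-y^{s-t})$ is nonnegative.

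The key step is the sign of $r_{s+1}r_t-r_sr_{t+1}$ for $s>t$. If $(r_k)$ is log-concave with positive terms, then $r_{k+1}/r_k$ is nonincreasing, so $r_{s+1}/r_s\le r_{t+1}/r_t$ and hence $d_{s,t}\le0$. This makes $F$ Schur-concave, which yields the Schur-concave property in part (a). In the log-convex case the ratio is nondecreasing, every inequality reverses, and part (b) follows. Positivity of $r_k$ holds as long as $X\neq0$; if some moments vanish then $X\equiv0$ and the statement is trivial.

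The only genuinely delicate points are the following.
\begin{enumerate}
\item The coefficient bookkeeping leading to $d_{s,t}$ must be done carefully.
\item The Schur property is needed in the variables $x_i=a_i^2\in[0,\infty)$, which is convex, so Lemma \ref{lem:Schur} applies there. The hypothesis compares $(a_i^2)$ and $(b_i^2)$, which is exactly majorization of points of $\Omega$.
\end{enumerate}
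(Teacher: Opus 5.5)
Your proof is correct. Up through the coefficients it coincides with the paper's argument: the same multinomial expansion in $x_i=a_i^2$, the same reduction via Schur--Ostrowski to the pair $(1,2)$, and the same splitting off of the nonnegative factor in $x_3,\dots,x_n$. Your $d_{s,t}=\frac{r_{s+1}r_t-r_sr_{t+1}}{s!\,t!}$ is exactly the paper's $d_j$ with $j=s$, where the paper's degree $t$ is your $m-1$.

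The two routes differ only in the sign step. The paper writes the derivative difference as $y^{m-1}P(x/y)$ and notes that the coefficients of $P$ change sign at most once. It checks $P(1)=0$ by a separate summation identity and applies Descartes' rule of signs to get the sign of $P$ on $[1,\infty)$. The boundary case $y=0$ is handled separately via Lemma \ref{lem:Key1}, which is only proved later, in Section \ref{sec:Robust1}.

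You instead use the exact antisymmetry $d_{t,s}=-d_{s,t}$ to write $(x-y)(\partial_xH_m-\partial_yH_m)=\sum_{s>t}d_{s,t}(xy)^t(x-y)(x^{s-t}-y^{s-t})$. This gives a termwise sign certificate on all of $[0,\infty)^2$, with no Descartes, no case split at $y=0$, and no forward reference. It also explains why $P(1)=0$ in the paper's version. Descartes would still work if the coefficients merely had one sign change and summed to zero without being antisymmetric. That extra generality is not needed here, so your version is the cleaner one.

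One cosmetic slip: in your intermediate display the subtracted term should be $(t+1)c_sc_{t+1}$, not $(t+1)c_tc_{t+1}$. The fraction containing $\frac{s!}{t!}\cdot\frac{t!}{s!}$ is also garbled, though your final formula for $d_{s,t}$ is correct.
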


\begin{remark}
Recall $(r_k)$ is log-concave if $X$ is ultra sub-Gaussian. Theorem \ref{thm:Main2} ensures that ultra sub-Gaussian random variables satisfy the Schur-concave property for even integers $p\geq 2$. Type-$\mathscr{L}$ random variables are ultra sub-Gaussian. This was established in \cite{Havrilla2}. Consequently, Theorem \ref{thm:Main2} implies all type-$\mathscr{L}$ random variables satisfy the Schur-concave property for even integers $p\geq 2$. 
\end{remark}

\begin{remark}
The authors are unaware of any random variable $X$ that satisfies the Schur-monotone property for some even $p\geq 2$ but for which $(r_k)$ is not log-monotone. We leave any possible classification theorem as an open problem.
\end{remark}

\subsection{Proof of Theorem \ref{thm:Main2}}
We first prove (a). Write $p=2q$ for some integer $q\geq 1$. For $a=(a_1, a_2\dots,a_n)\in \mathbb R_{\geq 0}^n$, define $f(a)=\sum_k \sqrt{a_k}\,X_k$. It suffices to prove 
\begin{align}
a\prec b \Longrightarrow \norm{f(a)}_{p}\ge \|f(b)\|_{p}\label{eq:Ineq1}
\end{align} for all $a,b\in \mathbb{R}^n_{\geq 0}$ by symmetry of $X$. Expanding the $p$th moment of $f(a)$ yields
\begin{align*}
\mathbb E[f(a)^{p}]
&=
\mathbb E\Bigl(\sum_{k=1}^n\sqrt{a_k}\,X_k\Bigr)^{2q}\\
&=
p!
\sum_{|\alpha|=q}
\prod_{i=1}^n
\frac{\mathbb E[X^{2\alpha_i}]}{(2\alpha_i)!}\,a_i^{\alpha_i}\\
&=
p!
\sum_{|\alpha|=q}
\prod_{i=1}^n \frac{r_{\alpha_i}}{\alpha_i!}a_i^{\alpha_i}.
\end{align*}
Recall $H:\mathbb{R}^n_+\to \mathbb{R}$ is \emph{Schur-concave} if $a\prec b$ implies $H(a)\geq H(b)$. We claim
\begin{align*}
H(a)=\sum_{|\alpha|=q}
\prod_{i=1}^n \frac{r_{\alpha_i}}{\alpha_i!}a_i^{\alpha_i}
\end{align*} is Schur-concave. The polynomial $H$ is continuously differentiable on the positive orthant $\mathbb R_+^n$. Therefore, by Lemma \ref{lem:Schur}, it suffices to prove 
\begin{align}
(a_i-a_j)\left(\frac{\partial H}{\partial a_i}-\frac{\partial H}{\partial a_j}\right)\le0\label{eq:Ineq2}
\end{align} for every $i\neq j$. By symmetry, it is enough to consider $i=1$ and $j=2$. Write $x=a_1$, $y=a_2$ and keep $a_3, a_4,\dots,a_n$ fixed. We decompose $H$ according to the exponents of the remaining variables. We have
\begin{align*}
H(x,y,a_3,\dots,a_n)
=
\sum_{|\alpha|=q}
\left(\prod_{i=3}^n \frac{r_{\alpha_i}}{\alpha_i!}a_i^{\alpha_i}\right)
\sum_{u+v=\alpha_1+\alpha_2}\frac{r_u}{u!}\frac{r_v}{v!}\,x^{u} y^{v}.
\end{align*}
Differentiating with respect to $x$ and $y$ gives
\begin{align*}
\frac{\partial H}{\partial x}-\frac{\partial H}{\partial y}
=
\sum_{|\alpha|=q}
\left(\prod_{i=3}^n \frac{r_{\alpha_i}}{\alpha_i!}a_i^{\alpha_i}\right)
D_t(x,y),
\end{align*}
in which $t=\alpha_1+\alpha_2-1$ and
\begin{align*}
D_t(x,y)
=
\sum_{j=0}^{t}
\Bigg(\frac{r_jr_{t-j}}{j!(t-j)!}\Big(\frac{r_{j+1}}{r_j}-\frac{r_{t-j+1}}{r_{t-j}}\Big)\Bigg)
x^j y^{\,t-j}.
\end{align*}
Define $d_j$ to be the coefficient of the term $x^jy^{t-j}$ in $D_t(x,y)$. Since $\prod_{i=3}^n r_{\alpha_i}a_i^{\alpha_i}$ is nonnegative for each $\alpha_i$, it remains to prove that $(x-y)D_t(x,y)\le0$.

Suppose $(r_k)$ is log-concave. Then the ratio sequence $\frac{r_{j+1}}{r_j}$ is nonincreasing in $j$. Therefore, $d_j\ge0$ whenever $j\le t-j$ and $d_j\le0$ whenever $j\ge t-j$. This implies the coefficient sequence $(d_0, d_1\dots,d_t)$ has at most one sign change, from
positive to negative. Now consider the one-variable polynomial $P_t(s)=\sum_{j=0}^t d_j s^j$. Note
\begin{align*}
\sum_{j=0}^t \frac{r_{j+1}r_{t-j}}{j!(t-j)!}
&=\sum_{j=0}^t \frac{r_jr_{t-j+1}}{j!(t-j)!},
\end{align*} which implies $P_t(1)=\sum_{j=0}^t d_j=0$. Descartes' rule of signs implies $s=1$ is the unique positive root of $P_t$ . The signs of the coefficients change from positive to negative, which implies that $P_t(s)\le0$ whenever $s\ge1$. 

 We remark the expression $(x-y)D_t(x,y)$ is symmetric in $x$ and $y$. It therefore suffices to consider the case $x\ge y$. If $y=0$, then we have
\begin{align*}
(x-y)D_t(x,y)&=xD_t(x,0)\\
&=\frac{r_t}{t!}\Big( \frac{r_{t+1}}{r_t}-r_1\Big) x^{t+1}\\
&=\frac{1}{t!}\Big( r_{t+1}-r_1r_t\Big) x^{t+1}.
\end{align*} Log-concavity of $(r_k)$ ensures $r_{t+1}-r_1r_t\leq 0$; see Lemma \ref{lem:Key1}. If $x\geq y>0$, then
\begin{align*}
D_t(x,y)=y^tP_t(x/y)\le0.
\end{align*}

The proof for (b) is similar. In particular, the inequalities in \eqref{eq:Ineq1} and \eqref{eq:Ineq2} are reversed. If $(r_k)$ is log-convex, then the ratio sequence $\frac{r_{j+1}}{r_j}$ is nondecreasing in $j$. Therefore, the signs of the coefficient sequence $(d_0, d_1\dots,d_t)$ change from negative to positive. The same calculation shows \(s=1\) is the unique positive root of \(P_t\) with $P_t(s)\ge0$ for $s\ge1$. Establishing $(x-y)D_t(x,y)\geq 0$ follows just as before. By symmetry, it suffices to consider $x\geq y$. The case $y=0$ is immediate from Lemma \ref{lem:Key1}. If $x\geq y>0$, then $D_t(x,y)=y^tP_t(x/y)\geq 0$.

\section{Khintchine-type inequalities}\label{sec:Khintchine}
We now give an alternate proof of Theorem \ref{thm:Main}. As previously mentioned, the Schur-monotone property has been studied because it yields Khintchine-type inequalities.
The Schur-monotone property yields a sharp comparison between the $L_p$ and $L_2$ norms of $S$, which is described in \cite[Corollary 24]{Eskenazis1} as follows. To begin, suppose $X$ satisfies the Schur-concave property for $p\geq 2$. Then we have
\begin{align*}
\textstyle (\frac{1}{n}, \ldots, \frac{1}{n})\prec (\frac{1}{n-1}, \ldots, \frac{1}{n-1}, 0)\Longrightarrow \norm{\frac{X_1+\cdots +X_n}{\sqrt{n}}}_p\geq \norm{\frac{X_1+\cdots +X_{n-1}}{\sqrt{n-1}}}_p, 
\end{align*} which implies $\norm{\frac{X_1+\cdots +X_n}{\sqrt{n}}}_p$ is increasing in $n$. By homogeneity of the $L_p$ norm, we may assume $a\in\mathbb{R}^n$ is a unit vector. The majorizations given by 
\begin{align*}
\textstyle(\frac{1}{n},\frac{1}{n} \ldots, \frac{1}{n})\prec (a_1^2, a_2^2\ldots, a_n^2)\prec (1, 0, \ldots, 0)
\end{align*}
imply $\norm{\frac{X_1+\cdots +X_n}{\sqrt{n}}}_p\geq \norm{S}_p\geq \norm{X_1}_p$. Observe $\mathbb{E}[X]=0$ and $\sigma^2=\norm{X}_2^2$. The central limit theorem implies that $\frac{X_1+\cdots +X_n}{\norm{X}_2\sqrt{n}}$ converges in distribution to a standard normal random variable. Our inequality above therefore becomes
\begin{align*}
\gamma_p\norm{X}_2\geq \norm{S}_p\geq \norm{X}_p,
\end{align*} in which $\gamma_p$ is the $p$th moment of a standard normal. Finally, independence implies $\norm{S}_2=\norm{X}_2$, which yields $\gamma_p\norm{S}_2\geq \norm{S}_p\geq \big( \frac{\norm{X}_p}{\norm{X}_2}\big) \norm{S}_2$. The Schur-convex case is similar. Therefore, Theorem \ref{thm:Main2} implies Theorem \ref{thm:Main}.

\section{Log-convex examples}\label{sec:Examples}

Several examples of type-$\mathscr{L}$ random variables are already given, for instance, in \cite{Havrilla2}. Moreover, these random variables are known to be ultra sub-Gaussian, which implies $(r_k)$ is log-concave. We therefore provide examples of distributions $X$ for which the moment sequence $r_k(X)$ is log-convex. We give two examples and leave it as an interesting open problem to try and find more of them.

\subsection{Gaussian mixtures} We call $X$ a \emph{Gaussian mixture} if $X=YZ$, in which $Y$ is a nonnegative random variable and $Z$ is a standard Gaussian random variable independent of $Y$. We claim $r_k(X)$ is log-convex. Independence implies
\begin{align*}
\mathbb{E}[X^{2k}]&=\mathbb{E}[Y^{2k}] \gamma_{2k}^{2k}=\frac{ (2k)!\mathbb{E}[Y^{2k}]}{2^k k!}.
\end{align*} Therefore, $r_k=r_k(X)=\frac{\mathbb{E}[Y^{2k}]}{2^k}$ is log-convex if and only if $r_k^2\leq r_{k-1}r_{k+1}$, which is equivalent to $\big(\mathbb{E}[Y^{2k}]\big)^2\leq \mathbb{E}[Y^{2k-2}]\mathbb{E}[Y^{2k+2}]$. Cauchy-Schwarz implies
\begin{align*}
\big(\mathbb{E}[Y^{2k}]\big)^2=\Big(\mathbb{E}\big[ Y^{\frac{2(k-1)}{2}} Y^{\frac{2(k+1)}{2}} \big]\Big)^2\leq \mathbb{E}[ Y^{2(k-1)}]\mathbb{E}[Y^{2(k+1)}],
\end{align*}as desired. Therefore, $r_k(X)$ is log-convex when $X$ is a Gaussian mixture.

\begin{remark}
Suppose $X$ has density proportional to $e^{-|x|^{\alpha}}$ for $0< \alpha\leq 2$. Then $X$ is a Gaussian mixture. This fact is established, for instance, in \cite[Section 2]{Eskenazis1}. We remark that a student's $t$ distribution is also a Gaussian mixture \cite[Section 2]{Eskenazis1}.
\end{remark}

\subsection{Signed exponentials of random variables} 
Consider the random variable $X=\epsilon e^Y$, in which $\epsilon$ is a Rademacher random variable and $Y$ is a random variable independent of $\epsilon$ with moment generating function $M(t)=\mathbb{E}[e^{tY}]$. Observe
\begin{align*}
\mathbb{E}[X^{2k}]=\mathbb{E}[\epsilon^{2k}]\mathbb{E}[e^{2kY}]=M(2k)
\end{align*} since $\mathbb{E}[\epsilon^{2k}]=1$ for all $k\geq 0$. Therefore,
\begin{align*}
r_k(X)= \frac{k!\mathbb{E}[X^{2k}]}{(2k)!}=\frac{k! M(2k)}{(2k)!}.
\end{align*} Observe $(r_k)$ is log-convex if and only if 
\begin{align*}
\Big[\frac{k! M(2k)}{(2k)!}\Big]^2\leq \frac{(k-1)! M(2k-2)}{(2k-2)!}\frac{(k+1)! M(2k+2)}{(2k+2)!},
\end{align*} which is equivalent to
\begin{align}
\frac{k!}{(k-1)!} \frac{(2k+2)!}{(2k)!} \leq \frac{ (k+1)!}{k!} \frac{(2k)!}{(2k-2)!} \frac{M(2k-2)M(2k+2)}{M(2k)^2}.\label{eq:Log}
\end{align}Observe \eqref{eq:Log} holds if and only if for all $k\geq 1$, we have
\begin{align*}
\frac{2k+1}{2k-1}\leq \frac{M(2k-2)M(2k+2)}{M(2k)^2}.
\end{align*} 

\begin{example}
Suppose $Y\sim N(0, \sigma^2)$. Then $M(t)=e^{\frac{\sigma^2t^2}{2}}$, which ensures 
\begin{align*}
\frac{M(2k-2)M(2k+2)}{M(2k)^2}=e^{4\sigma^2}.
\end{align*} We remark that $\frac{2k+1}{2k-1}$ is strictly decreasing for $k\geq 1$. Consequently, 
\begin{align}
e^{4\sigma^2}\geq 3\label{eq:LogConvex}
\end{align} implies that $r_k(X)$ is log-convex. Observe \eqref{eq:LogConvex} holds whenever $\sigma^2\geq \frac{\log(3)}{4}$. 
\end{example}

\section{Proof of Theorem \ref{thm:Robust1}}\label{sec:Robust1}

\begin{lemma}\label{lem:Key0} Let $r_k=r_k(X)$ for a random variable $X$. Define $f:\mathbb{Z}_{\geq 0}^n\to \mathbb{R}$ by setting 
\begin{align*}
f(\alpha_1, \alpha_2, \ldots, \alpha_n)=\prod_ir_{\alpha_i}.
\end{align*}
\begin{enumerate}
\item If $(r_k)$ is log-concave, then $f$ is Schur-concave;\\

\item If $(r_k)$ is log-convex, then $f$ is Schur-convex.
\end{enumerate}
\end{lemma}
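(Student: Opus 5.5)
Since the domain $\mathbb{Z}_{\geq 0}^n$ is discrete, the Schur--Ostrowski criterion (Lemma \ref{lem:Schur}) does not apply. I will instead use the discrete characterization of majorization: for integer vectors $\alpha\prec\beta$ with the same sum, $\alpha$ can be reached from $\beta$ by a finite sequence of elementary ``Robin Hood'' transfers. Each transfer takes $\beta_i>\beta_j+1$ and replaces $(\beta_i,\beta_j)$ by $(\beta_i-1,\beta_j+1)$, and each intermediate vector stays integer and majorizes the next. This is the integer version of the Hardy--Littlewood--P\'olya theorem; see \cite{Marshall}, or Muirhead's lemma. Given this, Schur-concavity of $f$ reduces to showing $f$ does not decrease under a single transfer. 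Permutation invariance of $f$ is clear, since $f$ is a product over coordinates, so we may work with decreasing rearrangements.

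For one transfer, all coordinates other than $i,j$ are unchanged, and the factors $\prod_{k\neq i,j} r_{\alpha_k}$ are positive because the $r_k$ are positive. So it suffices to prove the two-variable inequality
\begin{align*}
r_{m-1}r_{l+1}\geq r_m r_l \qquad \text{whenever } m\geq l+2
\end{align*}
in the log-concave case, with the reverse inequality in the log-convex case. Rewrite it as $\frac{r_{l+1}}{r_l}\geq \frac{r_m}{r_{m-1}}$. Log-concavity $r_k^2\geq r_{k-1}r_{k+1}$ with $r_k>0$ says exactly that the ratio $\frac{r_{k+1}}{r_k}$ is nonincreasing in $k$. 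Since $l\leq m-1$, the inequality follows. In the log-convex case the ratios are nondecreasing, so the inequality reverses and $f$ is Schur-convex.

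The main obstacle is justifying the decomposition of an integer majorization into integer transfers. The standard continuous T-transform decomposition may pass through non-integer points, where $f$ is undefined. I would give the explicit greedy argument. Take the smallest index $k$ where the partial sums of the decreasing rearrangements are strict, $\sum_{s\leq k}\widetilde\alpha_s<\sum_{s\leq k}\widetilde\beta_s$. Move one unit from the last position in $\widetilde\beta$ carrying the value $\widetilde\beta_k$ to the first later position $j$ where $\widetilde\beta_j<\widetilde\alpha_j$. One then checks that the result is still decreasing and still majorizes $\alpha$, and that the quantity $\sum_s|\widetilde\beta_s-\widetilde\alpha_s|$ strictly drops by $2$, so the process terminates. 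Alternatively, simply cite this integer decomposition from \cite{Marshall}.
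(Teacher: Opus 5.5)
Your proposal is correct and takes essentially the same route as the paper. The paper also invokes Muirhead's lemma to reduce to single unit transfers, then verifies $r_{\beta_j}r_{\beta_k}\le r_{\beta_k+1}r_{\beta_j-1}$ via monotonicity of $s_k=r_k/r_{k-1}$, which is exactly your inequality $r_{l+1}/r_l\ge r_m/r_{m-1}$. Your greedy argument for the integer decomposition is a valid self-contained replacement for the paper's citation of that step.
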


\begin{proof}
We first prove (a). Muirhead's lemma \cite[Lemma B.1]{Marshall} implies $\alpha\prec \beta$ if and only if $\alpha$ can be obtained from $\beta$ from a finite sequence of the transforms
\begin{align*}
\beta\mapsto\widetilde{\beta}= (\beta_1, \ldots, \beta_{j-1}, \beta_k+1, \beta_{j+1}, \ldots, \beta_{k-1}, \beta_j-1, \beta_{k+1}, \ldots, \beta_n).
\end{align*} It therefore suffices to prove $f(\beta)\leq f(\widetilde{\beta})$. This inequality holds if and only if 
\begin{align}
r_{\beta_j}r_{\beta_k}\leq r_{\beta_k+1}r_{\beta_j-1}\label{eq:Unit1}
\end{align} Define $s_k=\frac{r_k}{r_{k-1}}$ for $k\geq 1$. Then \eqref{eq:Unit1} is equivalent to
\begin{align}
s_{\beta_j}\leq s_{\beta_k+1}.\label{eq:Unit2}
\end{align}However, we can assume $\beta_j\geq \beta_k+1$ in $\beta$. For otherwise, $\beta$ cannot possibly majorize $\widetilde{\beta}$. The sequence $(s_k)$ is nonincreasing, which implies \eqref{eq:Unit2} holds. The proof of (b) is identical, but the inequalities in \eqref{eq:Unit1} and \eqref{eq:Unit2} are reversed.
\end{proof}

\begin{lemma}\label{lem:Key1}Let $r_k=r_k(X)$ for a random variable $X$. Let $\alpha=(\alpha_1, \alpha_2, \ldots )$ be a weak partition of $q$ with at least two nonzero parts. The following bounds hold.

\begin{enumerate}
\item If $(r_k)$ is log-concave, then $\prod_ir_{\alpha_i}\geq r_1r_{q-1}\geq r_q$;\\

\item If $(r_k)$ is log-convex, then $\prod_ir_{\alpha_i}\leq r_1r_{q-1}\leq r_q$.
\end{enumerate} 
\end{lemma}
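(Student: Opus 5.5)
The plan is to derive both chains from Lemma \ref{lem:Key0}: compare the vectors $\alpha$, $(1,q-1,0,\ldots,0)$ and $(q,0,\ldots,0)$ in the majorization order, and use $r_0=1$ so that padding zeros does not change the products. I treat part (a) in detail; part (b) is the same argument with every inequality reversed.

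First, since $r_0 = \mathbb{E}[X^0]=1$, we have $\prod_i r_{\alpha_i} = f(\alpha)$, where $f$ is the function of Lemma \ref{lem:Key0}. We may also pad or reorder parts freely. Reordering is harmless because $f$ is symmetric, and padding is harmless because $r_0=1$. So we may view $\alpha$, $\beta=(q-1,1,0,\ldots,0)$ and $\gamma=(q,0,\ldots,0)$ as elements of $\mathbb{Z}_{\geq 0}^m$ for a common $m\geq 2$.

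The key step is the majorization chain $\alpha\prec \beta\prec\gamma$.
\begin{itemize}
\item The relation $\beta\prec\gamma$ is immediate. The partial sums of $\gamma$ are all $q$, while those of $\beta$ are $q-1$ and then $q$.
\item For $\alpha\prec\beta$, sort $\alpha$ decreasingly as $\widetilde\alpha$. Since $\alpha$ has at least two nonzero parts, each nonnegative integer, we get $\widetilde\alpha_1\leq q-1$. For $k\geq 2$ the partial sums of $\beta$ equal $q$, which is the total sum and so bounds any partial sum of $\widetilde\alpha$. Both vectors sum to $q$, so the chain holds.
\end{itemize}

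With the chain in hand, Lemma \ref{lem:Key0}(a) applies: $f$ is Schur-concave when $(r_k)$ is log-concave. This gives $f(\alpha)\geq f(\beta)\geq f(\gamma)$, that is, $\prod_i r_{\alpha_i}\geq r_1r_{q-1}\geq r_q$. In the log-convex case, Lemma \ref{lem:Key0}(b) gives the reverse inequalities.

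The only delicate point I expect is the edge case. For $q=1$ no weak partition has two nonzero parts, so the statement is vacuous. The second inequality $r_1r_{q-1}\geq r_q$ is then only asserted for $q\geq 2$, which is exactly how it is used in the proof of Theorem \ref{thm:Main2}. Apart from that, one should confirm that Lemma \ref{lem:Key0} is applied on vectors of equal length, which the padding with zeros handles.
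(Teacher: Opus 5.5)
Your proposal is correct and follows the paper's own proof: the majorization chain $\alpha\prec(q-1,1,0,\ldots,0)\prec(q,0,\ldots,0)$, combined with the Schur-concavity (resp.\ Schur-convexity) of $f(\alpha)=\prod_i r_{\alpha_i}$ from Lemma \ref{lem:Key0}. Your remarks that $r_0=1$ justifies padding with zeros, and that the case $q=1$ is vacuous, fill in details the paper leaves implicit.
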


\begin{proof}
Define $f: \mathbb{Z}_{\geq 0}^n\to \mathbb{R}$ by setting $f(\alpha_1, \alpha_2, \ldots, \alpha_n)= \prod_i r_{\alpha_i}$. If $\alpha$ has at least two nonzero parts, then $\alpha\prec (q-1, 1, 0, \ldots, 0)\prec (q, 0, \ldots, 0)$. If $(r_k)$ is log-concave, then $f$ is Schur-concave by Lemma \ref{lem:Key0}. Therefore, 
\begin{align*}
f(\alpha)\geq f(q-1, 1, 0, \ldots, 0)\geq f(q, 0, \ldots, 0),
\end{align*} which is equivalent to the inequalities
\begin{align*}
\prod_i r_{\alpha_i}\geq r_1r_{q-1} \geq r_q.
\end{align*} This establishes (a). If $(r_k)$ is log-convex, then $f$ is Schur-convex by Lemma \ref{lem:Key0}. In this case, the above inequalities are reversed, which establishes (b).
\end{proof}

\subsection{Proof of (a)} We prove part (a) of Theorem \ref{thm:Robust1}. Suppose $\alpha=(\alpha_1, \alpha_2, \ldots, \alpha_n)$ is a weak partition of $q$ into $n$ parts. If $\ell(\alpha)=1$, then $\alpha$ contains one nonzero part. In this case, $\alpha=qe_j$ for some $j$, which implies $\prod_i r_{\alpha_i}=r_q$. If $\ell(\alpha)\geq 2$, then $\alpha$ contains at least two nonzero parts. Lemma \ref{lem:Key1} part (a) implies $\prod_ir_{\alpha_i}\geq r_1r_{q-1}$. We can expand $\norm{S}_p^p=\norm{\sum_ka_kX_k}_p^p$ as in the proof of Theorem \ref{thm:Main2} to obtain
\begin{align*}
\norm{\sum_ka_kX_k}_p^p&=p!\sum_{|\alpha|=q}\prod_i \frac{r_{\alpha_i}}{\alpha_i!} a_i^{2\alpha_i}\\
&=p!\sum_{\ell(\alpha)=1}\prod_i \frac{r_{\alpha_i}}{\alpha_i!} a_i^{2\alpha_i}+p!\sum_{\ell(\alpha)\geq 2}\prod_i \frac{r_{\alpha_i}}{\alpha_i!} a_i^{2\alpha_i}\\
&\geq p!r_q\sum_{\ell(\alpha)=1}\prod_i \frac{a_i^{2\alpha_i}}{\alpha_i!} +p!(r_1r_{q-1})\sum_{\ell(\alpha)\geq 2}\prod_i \frac{a_i^{2\alpha_i}}{\alpha_i!}.
\end{align*} We now add and subtract $r_q$ from the term $r_1r_{q-1}$ to obtain
\begin{align}
\norm{\sum_ka_kX_k}_p^p&\geq p!r_q\sum_{\ell(\alpha)=1}\prod_i \frac{a_i^{2\alpha_i}}{\alpha_i!} +p!(r_q+r_1r_{q-1}-r_q)\sum_{\ell(\alpha)\geq 2}\prod_i \frac{a_i^{2\alpha_i}}{\alpha_i!}\notag \\
&=p!r_q\sum_{|\alpha|=q}\prod_i \frac{a_i^{2\alpha_i}}{\alpha_i!} +p!(r_1r_{q-1}-r_q)\sum_{\ell(\alpha)\geq 2}\prod_i \frac{a_i^{2\alpha_i}}{\alpha_i!}.\label{eq:1}
\end{align} The multinomial theorem ensures $q!\sum_{|\alpha|=q}\prod_i \frac{a_i^{2\alpha_i}}{\alpha_i!}=(a_1^2+a_2^2+\cdots +a_n^2)^q$. Therefore, our normalization $\sum_ka_k^2=1$ implies 
\begin{align}
\sum_{|\alpha|=q}\prod_i \frac{a_i^{2\alpha_i}}{\alpha_i!}=\frac{1}{q!}.\label{eq:2}
\end{align} Relation \eqref{eq:2} implies
\begin{align}
\sum_{\ell(\alpha)\geq 2}\prod_i \frac{a_i^{2\alpha_i}}{\alpha_i!}&=\sum_{|\alpha|=q}\prod_i \frac{a_i^{2\alpha_i}}{\alpha_i!}-\sum_{\ell(\alpha)=1}\prod_i \frac{a_i^{2\alpha_i}}{\alpha_i!}\notag \\
&=\frac{1}{q!}-\frac{1}{q!}\sum_k a_k^p\notag\\
&=\frac{1}{q!}\Big( 1- \sum_ka_k^p\Big).\label{eq:3}
\end{align} Observe $\frac{p! r_q}{q!}=\norm{X}_p^p$. Therefore, relations \eqref{eq:1}, \eqref{eq:2} and \eqref{eq:3} imply
\begin{align}
\norm{\sum_ka_kX_k}_p^p\geq \norm{X}_p^p+\textstyle\frac{p!}{q!}(r_1r_{q-1}-r_q)\Big( 1- \displaystyle\sum_ka_k^p\Big).\label{eq:4}
\end{align}Finally, observe $\gamma_p^p=2^{-q} \frac{p!}{q!}$ and $\norm{X}_p^p=\big(\frac{\norm{X}_p}{\norm{X}_2}\big)^p\norm{S}_2^p$ since $\norm{S}_2=\norm{X}_2$. We remark that $r_1r_{q-1}-r_q\geq 0$ by Lemma \ref{lem:Key1}. This establishes our inequality.

We now prove that our inequality is asymptotically sharp near the coordinate extremizers. If $q=1$, then the defect term vanishes. Assume $q\geq 2$. In this case, let $a(t)=(\sqrt{1-t^2}, t,0, \ldots, 0)$ for small $t$. Observe
\begin{align}
1-\sum_ka_k(t)^p=1-(1-t^2)^q-t^{2q}=qt^2+O(t^4).\label{eq:Sharp}
\end{align}On the other hand,  a binomial expansion yields
\begin{align*}
\norm{S}_p^p&=\mathbb{E}| \sqrt{1-t^2}X_1+tX_2|^p\\
&=\sum_{k=0}^q \binom{p}{2k}(1-t^2)^{q-k}t^{2k}\mathbb{E}[X_1^{2(q-k)}]\mathbb{E}[X_2^{2k}].
\end{align*} Only $k=0,1$ contribute constant and quadratic terms. Therefore,
\begin{align*}
\norm{S}_p^p&=\textstyle(1-t^2)^q\mathbb{E}[X^{2q}]+\binom{p}{2}(1-t^2)^{q-1}t^2\mathbb{E}[X^{2(q-1)}]\mathbb{E}[X^2]+O(t^4)\\
&=\textstyle(1-t^2)^q\frac{(2q)!r_q}{q!}+q(2q-1)(1-t^2)^{q-1}t^2\frac{2!(2q-2)!r_1r_{q-1}}{(q-1)!}+O(t^4)\\
&=\textstyle(1-t^2)^q\frac{(2q)!r_q}{q!}+(1-t^2)^{q-1}t^2\frac{(2q)!r_1r_{q-1}}{(q-1)!}+O(t^4)\\
&=\textstyle \frac{(2q)! r_q}{q!}+\Big[\textstyle \frac{(2q)!r_1r_{q-1}}{(q-1)!}-q\frac{(2q)!r_q}{q!}+\Big]t^2+O(t^4)\\
&=\textstyle \mathbb{E}[X^{2q}]+\frac{(2q)!}{(q-1)!}\big( r_1r_{q-1}-r_q\big) t^2+O(t^4)\\
&=\norm{X}_p^p+qC_pt^2+O(t^4),
\end{align*}in which $C_p=2^q\gamma_p^p(r_1r_{q-1}-r_{q})$. In light of \eqref{eq:Sharp}, our lower bound reads
\begin{align*}
\norm{X}_p^p+C_p\Big(1-\sum_ka_k^p\Big)=\norm{X}_p^p+qC_pt^2+O(t^4).
\end{align*}Therefore, our inequality is asymptotically sharp near extremizers.

\subsection{Proof of (b)} The proof of Theorem \ref{thm:Robust1} part (b) is nearly identical to the proof above. However, applying Lemma \ref{lem:Key1} part (b) reverses the inequality in \eqref{eq:4}:
\begin{align}
\norm{\sum_ka_kX_k}_p^p\leq \norm{X}_p^p-\textstyle\frac{p!}{q!}(r_q-r_1r_{q-1})\Big( 1- \displaystyle\sum_ka_k^p\Big).\label{eq:5}
\end{align} Once again, observe $\gamma_p^p=2^{-q} \frac{p!}{q!}$ and $\norm{X}_p^p=\big(\frac{\norm{X}_p}{\norm{X}_2}\big)^p\norm{S}_2^p$. We remark that $r_q-r_1r_{q-1}\geq 0$ by Lemma \ref{lem:Key1}. Optimality of the constants follows as before.

\section{Proof of Theorem \ref{thm:Robust2}}\label{sec:Robust2}

\begin{lemma}\label{lem:Key2}Let $r_k=r_k(X)$ for a random variable $X$. Let $\alpha=(\alpha_1, \alpha_2, \ldots, \alpha_n)$ be a weak partition of $q$ with length $\ell(\alpha)<q$. The following bounds hold.

\begin{enumerate}
\item If $(r_k)$ is log-concave, then $\prod_ir_{\alpha_i}\leq r_1^{q-2}r_2$;\\

\item If $(r_k)$ is log-convex, then $\prod_ir_{\alpha_i}\geq r_1^{q-2}r_2$.
\end{enumerate} 
\end{lemma}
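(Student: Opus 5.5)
Lemma \ref{lem:Key2} follows the pattern of Lemma \ref{lem:Key1}: we use the Schur-monotonicity of $f(\alpha)=\prod_i r_{\alpha_i}$ from Lemma \ref{lem:Key0}. The only new ingredient is a majorization showing that every weak partition of length less than $q$ dominates one fixed partition.

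Consider the weak partition
\begin{align*}
\beta=(2,1,1,\ldots,1,0,\ldots,0),
\end{align*}
with one part equal to $2$ and $q-2$ parts equal to $1$. Then $|\beta|=q$ and $f(\beta)=r_2r_1^{q-2}$. We may take $n\geq q-1$ by padding $\alpha$ with zeros, since $r_0=1$ leaves $f$ unchanged.

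The key claim is that if $\alpha$ is a weak partition of $q$ with $\ell(\alpha)<q$, then $\beta\prec\alpha$. Since $\ell(\alpha)\le q-1$ and $|\alpha|=q$, some part of $\alpha$ is at least $2$, so $\widetilde{\alpha}_1\geq 2=\widetilde{\beta}_1$. For $k\geq 2$, the partial sums of $\beta$ are $\sum_{j\le k}\widetilde{\beta}_j=\min(k+1,q)$. We need $\sum_{j\le k}\widetilde{\alpha}_j\geq \min(k+1,q)$.
\begin{itemize}
\item If $k\geq \ell(\alpha)$, the partial sum of $\alpha$ is already $q$.
\item If $k<\ell(\alpha)$, the remaining $\ell(\alpha)-k$ nonzero parts are each at most $\widetilde{\alpha}_k$. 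The standard averaging argument for decreasing sequences gives $\sum_{j\le k}\widetilde{\alpha}_j\geq kq/\ell(\alpha)$. To reach $k+1$ we argue directly: the first $k$ parts are positive integers and include a part $\geq 2$, so their sum is at least $k+1$.
\end{itemize}
The totals agree, both equal to $q$, so $\beta\prec\alpha$.

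With the majorization in hand:
\begin{itemize}
\item If $(r_k)$ is log-concave, Lemma \ref{lem:Key0}(a) says $f$ is Schur-concave, so $f(\alpha)\le f(\beta)=r_1^{q-2}r_2$. This is (a).
\item If $(r_k)$ is log-convex, Lemma \ref{lem:Key0}(b) says $f$ is Schur-convex, so $f(\alpha)\geq r_1^{q-2}r_2$. This is (b).
\end{itemize}

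The main obstacle is the partial-sum verification. It hinges on the observation that $\ell(\alpha)<q$ forces the largest part to be at least $2$, and that every part among the top $\ell(\alpha)$ is at least $1$; together these give the bound $k+1$ for each $k<\ell(\alpha)$. One should also check that Lemma \ref{lem:Key0} is applied to integer vectors whose entries have the same total, which it is, since Muirhead's lemma handles exactly this case.
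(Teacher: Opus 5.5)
Your proposal is correct and follows the paper's proof. Both arguments show $(2,1,\ldots,1,0,\ldots,0)\prec\alpha$ for every weak partition $\alpha$ of $q$ with $\ell(\alpha)<q$, then apply the Schur-concavity (resp.\ Schur-convexity) of $f(\alpha)=\prod_i r_{\alpha_i}$ from Lemma~\ref{lem:Key0}; your partial-sum check (the top $k<\ell(\alpha)$ parts are positive integers including one $\geq 2$, so they sum to at least $k+1$) fills in the majorization step that the paper only asserts.
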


\begin{proof}
Define $f: \mathbb{Z}_{\geq 0}^n\to \mathbb{R}$ by setting $f(\alpha_1, \alpha_2, \ldots, \alpha_n)= \prod_i r_{\alpha_i}$. If $\alpha$ has length $\ell(\alpha)<q$, then $\alpha_i\geq 2$ for some $i$. It follows that 
\begin{align*}
(2, 1, 1, \ldots, 1)\prec \alpha,
\end{align*} in which 1 appears $q-2$ times and 0 is appended on either side as needed. If $(r_k)$ is log-concave, then $f$ is Schur-concave by Lemma \ref{lem:Key0}. Therefore, 
\begin{align*}
f(2,1,1,\ldots, 1)\geq f(\alpha)\Longleftrightarrow \prod_i r_{\alpha_i} \leq r_1^{q-2}r_{2}.
\end{align*} This establishes (a). If $(r_k)$ is log-convex, then $f$ is Schur-convex by Lemma \ref{lem:Key0}. In this case, the above inequalities are reversed, which establishes (b).
\end{proof}

\begin{lemma}\label{lem:Power4}
Let $a=(a_1, a_2, \ldots, a_n)\in \mathbb{R}^n$ satisfy $\sum_ka_k^2=1$. For each integer $q\geq 2$, 
\begin{align*}
q!\sum_{\ell(\alpha)<q} \prod_i \frac{a_i^{2\alpha_i}}{\alpha_i!}\geq \Delta_q(a), 
\end{align*}in which
\begin{align*}
0\leq \Delta_q(a)=\begin{cases} 1 & n<q,\\
1-q!\binom{n}{q}\Big( \frac{1-\sum_ka_k^4}{n(n-1)}\Big)^{\frac{q}{2}}& n\geq q.
\end{cases}
\end{align*}
\end{lemma}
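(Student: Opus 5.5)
By the multinomial identity \eqref{eq:2}, the full sum equals one:
\begin{align*}
q!\sum_{|\alpha|=q}\prod_i \frac{a_i^{2\alpha_i}}{\alpha_i!}=\Big(\sum_ka_k^2\Big)^q=1 .
\end{align*}
Hence the quantity in the lemma is $1-q!\sum_{\ell(\alpha)=q}\prod_i \frac{a_i^{2\alpha_i}}{\alpha_i!}$. A weak partition of $q$ with length $q$ has $q$ parts equal to $1$, so its factorials are all $1$. Therefore
\begin{align*}
q!\sum_{\ell(\alpha)<q} \prod_i \frac{a_i^{2\alpha_i}}{\alpha_i!}=1-q!\,e_q(a_1^2,\ldots,a_n^2),
\end{align*}
where $e_q$ is the $q$th elementary symmetric polynomial. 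If $n<q$, then $e_q=0$ and the left side is exactly $1=\Delta_q(a)$. It therefore remains, for $n\geq q$, to prove the upper bound
\begin{align*}
e_q(x)\leq \binom{n}{q}\Big(\frac{1-\sum_k x_k^2}{n(n-1)}\Big)^{q/2},\qquad x_k=a_k^2\geq 0,\ \textstyle\sum_k x_k=1 .
\end{align*}

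The key observation is $1-\sum_k x_k^2=\big(\sum_k x_k\big)^2-\sum_k x_k^2=2e_2(x)$. The right side is then $\binom{n}{q}\big(e_2(x)/\binom{n}{2}\big)^{q/2}$. In terms of the normalized symmetric means $E_j=e_j/\binom{n}{j}$, the desired inequality reads $E_q^{1/q}\leq E_2^{1/2}$. This is Maclaurin's inequality $E_1\geq E_2^{1/2}\geq E_3^{1/3}\geq\cdots\geq E_n^{1/n}$ for nonnegative reals, which applies since $x_k\geq 0$. For $q=2$ it is an equality. I would cite Maclaurin's inequality directly, or derive it from Newton's inequalities $E_{j}^2\geq E_{j-1}E_{j+1}$ by the standard telescoping argument. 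The nonnegativity $\Delta_q(a)\geq 0$ then follows, since the left-hand side of the lemma is bounded above by $1$.

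The main step is recognizing $1-\sum_k a_k^4$ as $2e_2$ of the squared coefficients, so that $\Delta_q$ is exactly the Maclaurin-type quantity. Once that is seen, the remaining bookkeeping is routine: the $q!$ from the $\ell(\alpha)=q$ terms, and checking that $q!\binom{n}{q}\big(2e_2/(n(n-1))\big)^{q/2}=q!\binom{n}{q}E_2^{q/2}$.
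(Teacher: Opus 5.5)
Your proof of the main inequality follows the paper's route exactly. You rewrite the sum as $1-q!\,e_q(a_1^2,\ldots,a_n^2)$ using \eqref{eq:2}, note $e_q=0$ when $n<q$, identify $1-\sum_k a_k^4=2e_2(a_1^2,\ldots,a_n^2)$, and apply Maclaurin's inequality $E_q^{1/q}\le E_2^{1/2}$. That part is correct.

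The gap is the nonnegativity $\Delta_q(a)\ge 0$, which is part of the statement. You say it follows because the left-hand side is at most $1$, but that does not follow. Combining ``left-hand side $\le 1$'' (i.e.\ $q!\,e_q\ge 0$) with ``left-hand side $\ge \Delta_q(a)$'' only gives $\Delta_q(a)\le 1$; it says nothing about the sign of $\Delta_q(a)$. For $n\ge q$, nonnegativity is exactly the statement $q!\binom{n}{q}E_2^{q/2}\le 1$, which is an \emph{upper} bound on $E_2$. The Maclaurin step you used bounds $E_2$ from below by $E_q$, so it cannot deliver this.

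The missing ingredient is the next link of the Maclaurin chain, $E_2^{1/2}\le E_1=\frac1n$. Equivalently, $\sum_k a_k^4\ge \frac1n$ by Cauchy--Schwarz, which is what the paper uses. This gives
\[
q!\binom{n}{q}E_2^{q/2}\le \frac{n(n-1)\cdots(n-q+1)}{n^q}\le 1 .
\]
With that line added, your argument is complete and essentially identical to the paper's.
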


\begin{proof}
Relation \eqref{eq:2} implies
\begin{align}
q!\sum_{\ell(\alpha)<q} \prod_i \frac{a_i^{2\alpha_i}}{\alpha_i!}&=q!\sum_{|\alpha|=q} \prod_i \frac{a_i^{2\alpha_i}}{\alpha_i!}-q!\sum_{\ell(\alpha)=q} \prod_i \frac{a_i^{2\alpha_i}}{\alpha_i!}\notag\\
&=1-q!\sum_{\ell(\alpha)=q} \prod_i \frac{a_i^{2\alpha_i}}{\alpha_i!}.\label{eq:Elementary1}
\end{align}If $\ell(\alpha)=q$, then $\alpha_i=0,1$ for each $i$. Therefore, \eqref{eq:Elementary1} implies
\begin{align}
q!\sum_{\ell(\alpha)<q} \prod_i \frac{a_i^{2\alpha_i}}{\alpha_i!}=1-q!e_q(a_1^2, a_2^2, \ldots, a_n^2),\label{eq:Elementary2}
\end{align}in which $e_q$ denotes the elementary symmetric polynomial of degree $q$. If $n<q$, then the elementary symmetric polynomial vanishes and \eqref{eq:Elementary2} becomes
\begin{align*}
q!\sum_{\ell(\alpha)<q} \prod_i \frac{a_i^{2\alpha_i}}{\alpha_i!}=1.
\end{align*} Suppose that $n\geq q$. Let $p_j(x_1, x_2, \ldots, x_n)=\sum_kx_k^j$ denote the $j$th power sum. The fundamental identity
$e_2=\frac{1}{2}(p_1^2-p_2)$ ensures that
\begin{align}
e_2(a_1^2, a_2^2, \ldots, a_n^2)&=\frac{1}{2}\Big( 1-\sum_ka_k^4\Big)\label{eq:Elementary3}
\end{align} Maclaurin's inequality for $e_q$ \cite[Theorem 1.1]{Niculescu}  and \eqref{eq:Elementary3} yield, for $q\geq 2$,
\begin{align*}
e_q(a_1^2, a_2^2, \ldots, a_n^2)&\leq \binom{n}{q}\Bigg( \frac{2e_2( a_1^2, a_2^2, \ldots, a_n^2)}{n(n-1)} \Bigg)^{\frac{q}{2}}\\
&=\binom{n}{q}\Bigg( \frac{1-\sum_ka_k^4}{n(n-1)}\Bigg)^{\frac{q}{2}}.
\end{align*} Combine this inequality with \eqref{eq:Elementary2} to conclude the first inequality. It remains to show $\Delta_q(a)\geq 0$.  Cauchy-Schwarz ensures that
\begin{align*}
\Big(\sum_k a_k^2\Big)^2\leq \sum_ka_k^4\sum_k1=n\sum_ka_k^4\Longrightarrow \sum_ka_k^4\geq \frac{1}{n}
\end{align*}Observe $q!\binom{n}{q}=n(n-1)\cdots (n-q+1)\leq n^q$. Therefore,
\begin{align*}
q!\binom{n}{q}\Bigg( \frac{1-\sum_ka_k^4}{n(n-1)}\Bigg)^{\frac{q}{2}}&\leq n^q\Big( \frac{1-\frac{1}{n}}{n(n-1)}\Big)^{\frac{q}{2}}\\
&=n^q\Big( \frac{1}{n^2}\Big)^{\frac{q}{2}}\\
&=1,
\end{align*}which implies $1-q!\binom{n}{q}\Big( \frac{1-\sum_ka_k^4}{n(n-1)}\Big)^{\frac{q}{2}}\geq 0$. Therefore, $\Delta_q(a)\geq 0$.
\end{proof}

\begin{lemma}
Let $q\geq 2$ be an integer and $a\in \mathbb{R}^n$ satisfy $\sum_ka_k^2=1$. Then for all $n$, 
\begin{align*}
\Delta_q(a)\geq \sum_ka_k^4.
\end{align*}
\end{lemma}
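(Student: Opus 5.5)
The plan is to split according to the two cases in the definition of $\Delta_q(a)$ and set $s=\sum_ka_k^4$. The first thing to note is that $0\le s\le (\sum_ka_k^2)^2=1$. If $n<q$, then $\Delta_q(a)=1\ge s$ and there is nothing more to do. So assume $n\ge q$ and put $u=1-s$. By the Cauchy–Schwarz bound $s\ge \frac1n$ from the proof of Lemma \ref{lem:Power4}, we have $0\le u\le 1-\frac1n=\frac{n-1}{n}$. The claim $\Delta_q(a)\ge s$ is then equivalent to
\begin{align*}
n(n-1)\cdots(n-q+1)\Big(\frac{u}{n(n-1)}\Big)^{\frac q2}\le u .
\end{align*}

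For $q=2$ the left side is exactly $u$, so equality holds and we are done. For $q\ge 3$, the case $u=0$ is trivial, so we may divide by $u$. The inequality then reduces to
\begin{align*}
u^{\frac q2-1}\,n(n-1)\cdots(n-q+1)\le \big(n(n-1)\big)^{\frac q2}.
\end{align*}
Since the exponent $\frac q2-1$ is positive, the left side is increasing in $u$. It therefore suffices to check the extreme value $u=\frac{n-1}{n}$.

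At $u=\frac{n-1}{n}$ I bound the falling factorial crudely by $n(n-1)\cdots(n-q+1)\le n(n-1)^{q-1}$. The required inequality then becomes
\begin{align*}
\Big(\frac{n-1}{n}\Big)^{\frac q2-1} n(n-1)^{q-1}\le n^{\frac q2}(n-1)^{\frac q2}.
\end{align*}
Comparing powers, this is equivalent to $(n-1)^{q-2}\le n^{q-2}$, which is obviously true.

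The only delicate point is the bookkeeping of exponents in this last comparison. I would also double-check that the monotonicity-in-$u$ reduction really uses $q\ge 3$, with $q=2$ handled separately by the equality case. Everything else is routine algebra.
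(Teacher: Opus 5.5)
Your proof is correct and follows essentially the same route as the paper. Both reduce to $u^{q/2-1}\,n(n-1)\cdots(n-q+1)\le (n(n-1))^{q/2}$, use $\sum_k a_k^4\ge \frac1n$ to substitute $u=\frac{n-1}{n}$, and then check that a ratio is at most $1$. The paper does this last step by proving $A(q,n)=\frac{n(n-1)\cdots(n-q+1)}{n^{q-1}(n-1)}\le 1$ by induction on $q$, whereas you use the cruder bound $n(n-1)\cdots(n-q+1)\le n(n-1)^{q-1}$, which is equally valid. Your separate handling of $q=2$ and $u=0$ is slightly more careful than the paper, which divides by $1-p_4$ without comment.
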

\begin{proof}
The claim holds  when $n<q$ since $\sum_ka_k^4\leq 1$. Assume $n\geq q$. We show
\begin{align*}
1-p_4\geq q!\binom{n}{q}\Big( \frac{1-p_4}{n(n-1)}\Big)^{\frac{q}{2}},
\end{align*}in which $p_4=\sum_ka_k^4$. This is equivalent to
\begin{align}
 q!\binom{n}{q} \frac{(1-p_4)^{\frac{q}{2}-1}}{\big(n(n-1)\big)^{\frac{q}{2}}}\leq 1.\label{eq:Induction}
\end{align}
As seen above, $p_4\geq \frac{1}{n}$, which implies 
\begin{align*}
 q!\binom{n}{q} \frac{(1-p_4)^{\frac{q}{2}-1}}{\big(n(n-1)\big)^{\frac{q}{2}}}&\leq q!\binom{n}{q} \frac{(1-\frac{1}{n})^{\frac{q}{2}-1}}{\big(n(n-1)\big)^{\frac{q}{2}}}\\
&= q!\binom{n}{q} \frac{1}{n^{q-1}(n-1)}\\
&=\frac{ n(n-1)\cdots (n-q+1)}{ n^{q-1}(n-1)  }.
\end{align*}Let $A(q,n)=\frac{ n(n-1)\cdots (n-q+1)}{ n^{q-1}(n-1)  }$. We prove $A(q,n)\leq 1$.  If $q=2$, then $A(q,n)=1$. Observe $A(q+1, n)=\big(\frac{n-q}{n}\big)A(q,n)$, which allows for induction on $q\geq 2$ since $\frac{n-q}{n}\leq 1$. This establishes \eqref{eq:Induction} and proves the claim.
\end{proof}
\subsection{Proof of (a)} We now prove part (a) of Theorem \ref{thm:Robust2}. Assume $X$ has unit variance. In this case, $\mathbb{E}[X^2]=1$, and we have $r_1=\frac{1}{2}$. Observe
\begin{align*}
\gamma_p^p=\mathbb{E}|Z|^p=\frac{p!}{2^q q!}=\frac{p!r_1^q}{q!},
\end{align*} in which $Z$ is a standard normal random variable. Relation \eqref{eq:2} implies
\begin{align*}
\gamma_p^p=\frac{p!r_1^q}{q!}=p!r_1^q\sum_{|\alpha|=q}\prod_i\frac{a_i^{2\alpha_i}}{\alpha_i!}.
\end{align*}Expand $\norm{S}_p^p$ as in the proof of Theorem \ref{thm:Main2} to get
\begin{align}
\gamma_p^p-\norm{S}_p^p&=p!\sum_{|\alpha|=q}r_1^q\prod_i \frac{a_i^{2\alpha_i}}{\alpha_i!}-p!\sum_{|\alpha|=q}\prod_i \frac{r_{\alpha_i} a_i^{2\alpha_i}}{\alpha_i!}\notag\\
&=p!\sum_{|\alpha|=q}\Big( r_1^q-\prod_i r_{\alpha_i}\Big) \prod_i \frac{a_i^{2\alpha_i}}{\alpha_i!}.\label{eq:6}
\end{align} If $\alpha$ is a weak partition of length $q$, then $r_1^q-\prod_i r_{\alpha_i}=0$. Therefore, the summation in \eqref{eq:6} is taken over all weak partitions of $q$ of length $\ell(\alpha)<q$. In this case, $r_1^q-\prod_i r_{\alpha_i}\geq r_1^q-r_1^{q-2}r_2$ by Lemma \ref{lem:Key2} part (a). Therefore, \eqref{eq:6} becomes 
\begin{align}
\gamma_p^p-\norm{S}_p^p\geq p!r_1^{q-2}(r_1^2-r_2)\sum_{\ell(\alpha)<q} \prod_i \frac{a_i^{2\alpha_i}}{\alpha_i!}.\label{eq:7}
\end{align}Log-concavity of $(r_k)$ ensures $r_1^2\geq r_2$. Therefore, we must bound the summation $\sum_{\ell(\alpha)<q} \prod_i \frac{a_i^{2\alpha_i}}{\alpha_i!}$ from below. We apply Lemma \ref{lem:Power4} to \eqref{eq:7} to get
\begin{align*}
\gamma_p^p-\norm{S}_p^p\geq \frac{p!}{q!}r_1^{q-2}(r_1^2-r_2)\Delta_q(a)=2^q\gamma_p^pr_1^{q-2}(r_1^2-r_2)\Delta_q(a).
\end{align*} Solve the above inequality for $\norm{S}_p^p$ to conclude
\begin{align}
\norm{S}_p^p\leq \gamma_p^p-2^q\gamma_p^pr_1^{q-2}(r_1^2-r_2)\Delta_q(a).\label{eq:8}
\end{align}Finally, if the variance of $X$ is such that $\sigma^2\neq 1$, then apply \eqref{eq:8} to the random variable $\widetilde{X}=\sigma^{-1}X$. In this case, \eqref{eq:8} is equivalent to 
\begin{align}
\sigma^{-p}\norm{S}_p^p\leq \gamma_p^p-2^q\gamma_p^p\sigma^{-p}r_1^{q-2}(r_1^2-r_2)\Delta_q(a).\label{eq:9}
\end{align} Multiply both sides of \eqref{eq:9} by $\sigma^p=\norm{S}_2^p$ to conclude the inequality in (a).

We now prove our inequality is sharp near the Gaussian extremizer. Let $n\geq q$ and  $a=(\frac{1}{\sqrt{n}}, \frac{1}{\sqrt{n}}, \ldots, \frac{1}{\sqrt{n}})$. In this case, $\sum_ka_k^4=\frac{1}{n^2}\sum_k 1=\frac{1}{n}$ and
\begin{align*}
\Delta_q(a)&=1-q!\binom{n}{q}\Bigg( \frac{1-\frac{1}{n}}{n(n-1)}\Bigg)^{\frac{q}{2}}\\
&=1-\frac{q!\binom{n}{q}}{n^q}\\
&=\frac{q(q-1)}{2n}+O(n^{-2})\\
&=\binom{q}{2}\sum_ka_k^4+O(n^{-2}).
\end{align*} We may assume $X$ has unit variance. The right side of our inequality satisfies
\begin{align}
\gamma_p^p-C_p\Delta_q(a)=\gamma_p^p-C_p\binom{q}{2}\sum_ka_k^4+O(n^{-2}),\label{eq:Compare}
\end{align}in which $C_p=2^q\gamma_p^pr_1^{q-2}(r_1^2-r_2)$.  We now evaluate $\norm{S_n}_p^p$, in which $S_n=\sum_k \frac{X_k}{\sqrt{n}}$. As seen in the proof of Theorem \ref{thm:Main2}, we have
\begin{align}
\norm{S_n}_p^p&=p!(\sqrt{n})^{-p}\mathbb{E}[(X_1+X_2+\cdots +X_n)^p]\notag\\
&=p!n^{-q}\sum_{|\alpha|=q}\prod_i \frac{r_{\alpha_i}}{\alpha_i!}.\label{eq:Combo}
\end{align}The number of weak partitions of $q$ into $n$ parts with nonzero parts $(\alpha_1, \alpha_2, \ldots, \alpha_{\ell})$ is given by $\frac{n!}{(n-\ell)!\prod_i m_i!}$, in which $m_i$ denotes the multiplicity of $i$. Observe 
\begin{align*}
n^{-q}\frac{n!}{(n-\ell)!\prod_i m_i!}=O(n^{\ell-q}).
\end{align*} We therefore only need the terms in \eqref{eq:Combo} with $\ell(\alpha)=q-1,q$. These correspond to the partition types $(1, 1,\ldots, 1)$ and $(2, 1, \ldots, 1)$. Therefore, \eqref{eq:Combo} becomes
\begin{align}
\norm{S_n}_p^p&=p!n^{-q}\sum_{\ell(\alpha)=q}\prod_i \frac{r_{\alpha_i}}{\alpha_i!}+p!n^{-q}\sum_{\ell(\alpha)=q-1}\prod_i \frac{r_{\alpha_i}}{\alpha_i!}+O(n^{-2})\notag\\
&=p!n^{-q}\frac{n!r_1^q}{(n-q)! q!}+p!n^{-q}\frac{n!r_1^{q-2}r_2}{(n-q+1)!(q-2)!2!}+O(n^{-2})\notag\\
&=p!n^{-q}\binom{n}{q}r_1^q+p!n^{-q+1}\binom{n-1}{q-2}\frac{r_1^{q-2}r_2}{2}+O(n^{-2}).\label{eq:Final}
\end{align}Observe $\binom{n}{q}=\frac{n^q}{q!}-\binom{q}{2}\frac{n^{q-1}}{q!}+O(n^{q-2})$. Therefore, \eqref{eq:Final} becomes
\begin{align}
\norm{S_n}_p^p&=p!n^{-q}\Bigg( \frac{n^q}{q!}-\binom{q}{2}\frac{n^{q-1}}{q!}     \Bigg)r_1^q+p!n^{-q+1}\Bigg(   \frac{(n-1)^{q-2}}{2(q-2)!}\Bigg)r_1^{q-2}r_2+O(n^{-2})\notag\\
&=\frac{p!}{q!} r_1^q-p!\Bigg( \binom{q}{2}\frac{r_1^q}{q!}-\frac{r_1^{q-2}r_2}{2(q-2)!}     \Bigg)n^{-1}+O(n^{-2})\notag\\
&=\frac{p!}{q!} r_1^q-\frac{p!}{q!}\binom{q}{2}r_1^{q-2}\big(r_1^2-r_2\big)n^{-1}+O(n^{-2}).\label{eq:Final2}
\end{align}We have $r_1=\frac{1}{2}$ since $X$ has unit variance. This implies $\frac{p!}{q!}r_1^q=2^{-q}\frac{p!}{q!}=\gamma_p^p$. Moreover, $n^{-1}=\sum_ka_k^4$. Therefore, \eqref{eq:Final2} yields
\begin{align}
\norm{S_n}_p^p=\gamma_p^p-C_p\binom{q}{2}\sum_ka_k^4+O(n^{-2}).\label{eq:Final3}
\end{align} Compare \eqref{eq:Compare} and \eqref{eq:Final3} to complete the proof.

\subsection{Proof of (b)} Assume once again that $X$ has unit variance. Observe \eqref{eq:6} still holds. However, we now apply Lemma \ref{lem:Key2} part (b) to conclude
\begin{align*}
\gamma_p^p-\norm{S}_p^p&\leq p!r_1^{q-2}(r_1^2-r_2)\sum_{\ell(\alpha)<q} \prod_i \frac{a_i^{2\alpha_i}}{\alpha_i!}\notag\\
&=-p!r_1^{q-2}(r_2-r_1^2)\sum_{\ell(\alpha)<q} \prod_i \frac{a_i^{2\alpha_i}}{\alpha_i!}
\end{align*}This is equivalent to 
\begin{align*}
\norm{S}_p^p-\gamma_p^p\geq p!r_1^{q-2}(r_2-r_1^2)\sum_{\ell(\alpha)<q} \prod_i \frac{a_i^{2\alpha_i}}{\alpha_i!}.
\end{align*} Log-convexity of $(r_k)$ ensures $r_1^2\leq r_2$. Therefore, we must once again bound the summation $\sum_{\ell(\alpha)<q} \prod_i \frac{a_i^{2\alpha_i}}{\alpha_i!}$ from below. Lemma \ref{lem:Power4} implies
\begin{align*}
\norm{S}_p^p-\gamma_p^p \geq \frac{p!}{q!}r_1^{q-2}(r_2-r_1^2)\Delta_q(a)=2^q\gamma_p^pr_1^{q-2}(r_2-r_1^2)\Delta_q(a).
\end{align*} Solve the inequality for $\norm{S}_p^p$ and proceed as before to conclude the inequality in  (b). The asymptotic sharpness argument goes  just as before.

\section{Closing remarks, diagonal stability and standard exponentials}\label{sec:Diagonal}
Suppose $r_k$ is log-monotone. We have established new sharp comparisons between the $L_p$ and $L_2$ norms of the weighted sum $S=\sum_ka_kX_k$. Moreover, we investigated the stability of these inequalities. Finally, our paper is among the first to study stability of Khintchine-type inequalities for non-Rademacher distributions. There are potentially two natural next steps to take, which we now highlight.

\subsection{Diagonal stability}

Suppose $a\in \mathbb{R}^n$ is a unit vector. If $X$ satisfies the Schur-concave property for $p$, then $(\frac{1}{n}, \frac{1}{n},\ldots, \frac{1}{n})\prec (a_1^2, a_2^2, \ldots, a_n^2)$ implies $\norm{S}_p\leq \norm{ Y_n}_p$, in which
\begin{align*}
Y_n=\textstyle\frac{1}{\sqrt{n}}(X_1+X_2+\cdots +X_n).
\end{align*} The equality is reversed if $X$ satisfies the Schur-convex property. In light of Theorem \ref{thm:Main2}, we propose the following \emph{diagonal stability} conjecture.

\begin{conjecture}\label{con:Diagonal}
Suppose $p\geq 2$ is an even integer and $(a_1, a_2, \ldots, a_n)\in \mathbb{R}^n$ satisfies $\sum_ka_k^2=1$. Furthermore, suppose $X$ is a symmetric random variable and $X_1, X_2, \ldots, X_n$ are iid copies of $X$. Let $r_k=r_k(X)$. Then there exist positive constants $C_p>0$ such that the following bounds hold for $S=\sum_ka_kX_k$.
\begin{enumerate}
\item If $(r_k)$ is log-concave, then 
\begin{align*}
\norm{S}_p^p\leq \norm{Y_n}_p^p-C_p\sum_k\textstyle (a_k^2-\frac{1}{n})^2;
\end{align*} 

\item If $(r_k)$ is log-convex, then 
\begin{align*}
\norm{S}_p^p\geq \norm{Y_n}_p^p+C_p\sum_k\textstyle (a_k^2-\frac{1}{n})^2.
\end{align*} 
\end{enumerate}
\end{conjecture}
The diagonal stability conjecture was established for the Rademacher distribution by Jakimiuk \cite{Jakimiuk}. They showed that there exist constants $C_p>0$ for which 
\begin{align*}
\mathbb{E}|\sum_ka_k\epsilon_k|^p\leq \mathbb{E}|\textstyle\frac{1}{\sqrt{n}}\displaystyle\sum_k\epsilon_k|^p-C_p\sum_k\textstyle (a_k^2-\frac{1}{n})^2
\end{align*} for every $p\geq 4$. Of course, the summation $\sum_k(a_k^2-\frac{1}{n})^2$ measures the distance of $a$ from the extremizer. We leave Conjecture \ref{con:Diagonal} as an open problem.

\subsection{Standard exponentials}

Our paper of course considers the case when $X$ is symmetric. Recent work \cite{Brazitikos1,Brazitikos2} establishes sharp bounds for the moments  $\norm{\sum_k a_k X_k}_p$, in which $a\in \mathbb{R}^n$ and $X_1, X_2, \ldots, X_n$ are iid standard exponential random variables. For example, the weighted sum $S=\sum_ka_kX_k$ satisfies the bound $\norm{S}_p\geq \gamma_p\sigma(S)$ for every $p\geq 2$. This result was established by Brazitikos, Tang and Tkocz in \cite{Brazitikos2}. Obtaining a stability version of this inequality, or those inequalities appearing in \cite{Brazitikos1}, would be interesting, even in the case $p\geq 2$ is even.


\bibliographystyle{plain}
\bibliography{Khintchine.bib}

@incollection {Branden,
	AUTHOR = {Br\"and\'en, Petter},
	TITLE = {Unimodality, log-concavity, real-rootedness and beyond},
	BOOKTITLE = {Handbook of Enumerative Combinatorics},
	SERIES = {Discrete Math. Appl. (Boca Raton)},
	PAGES = {437--483},
	PUBLISHER = {CRC Press, Boca Raton, FL},
	YEAR = {2015},
}

@misc{Brazitikos1,
author = {Brazitikos, Silouanos and Pandis, Christos},
year = {2025},
title = {Sharp inequalities for symmetric polynomials, {H}unter's conjecture, and moments of exponential random variables},
note = {Preprint at \url{https://arxiv.org/abs/2512.12254}}
}

@misc{Brazitikos2,
author = {Brazitikos, Silouanos and Tang, Colin and Tkocz, Tomasz},
year = {2026},
title = {Moments of sums of exponentials, beyond CHS},
note = {Preprint at \url{https://arxiv.org/abs/2602.03058}}
}

@article {Chavez1,
	AUTHOR = {Ch{\'a}vez, {\'A}ngel and Garcia, Stephan Ramon and Hurley,
	Jackson},
	TITLE = {Norms on complex matrices induced by random vectors},
	JOURNAL = {Canad. Math. Bull.},
	FJOURNAL = {Canadian Mathematical Bulletin. Bulletin Canadien de
	Math\'ematiques},
	VOLUME = {66},
	YEAR = {2023},
	NUMBER = {3},
	PAGES = {808--826},
	ISSN = {0008-4395,1496-4287},
	MRCLASS = {60B20 (05E05 15A60)},
	MRNUMBER = {4651637},
	DOI = {10.4153/s0008439522000741},
}

@article{De,
author = {De, Anindya and Diakonikolas, Ilias and Servedio, Rocca A.},
year = {2016},
pages = {1058--1094},
title = {A robust {K}hintchine inequality, and algorithms for computing optimal constants in {F}ourier analysis and high-dimensional geometry},
volume = {30},
number = {2},
journal = {SIAM J. Discrete Math.}
}

@article{Eskenazis1,
author = {Eskenazis, Alexandros and Nayar, Piotr and Tkocz, Tomasz},
year = {2018},
pages = {2908--2945},
title = {Gaussian mixtures: Entropy and geometric inequalities},
volume = {56},
number = {5},
journal = {Ann. Probab.}
}

@article{Eskenazis2,
author = {Eskenazis, Alexandros and Nayar, Piotr and Tkocz, Tomasz},
year = {2018},
pages = {389--416},
title = {Sharp comparison of moments and the log-concave moment problem},
volume = {334},
journal = {Adv. Math.}
}

@article{Eskenazis3,
author = {Eskenazis, Alexandros and Nayar, Piotr and Tkocz, Tomasz},
year = {2024},
pages = {1161--1185},
title = {Distributional stability of the {S}zarek and {B}all inequalities},
volume = {389},
journal = {Math. Ann.}
}

@article{Eskenazis4,
author = {Eskenazis, Alexandros and Nayar, Piotr and Tkocz, Tomasz},
year = {2024},
pages = {3377--3412},
title = {Resilience of cube slicing in $\ell_p$},
volume = {173},
number = {17},
journal = {Duke Math. J.}
}

@article{Figiel,
	title={Extremal properties
of {R}ademacher functions with applications to the {K}hintchine and {R}osenthal inequalities},
	author={Figiel, Tomasz and Hitczenko, Pawe{\l} and Johnson, W. B. and Schechtman, Gideon and Zinn, Joel},
	journal={Trans. Amer. Math. Soc.},
	volume={349},
	number={3},
	pages={997--1027},
	year={1997}
}

@article{Haagerup,
author = {Haagerup, Uffe},
year = {1981},
pages = {231--283},
title = {The best constants in the {K}hintchine inequality},
volume = {70},
number = {3},
journal = {Studia Math.},
}

@article{Havrilla1,
author = {Havrilla, Alexander and Tkocz, Tomasz},
year = {2021},
month = {11},
pages = {},
title = {Sharp {K}hinchin-type inequalities for symmetric discrete uniform random variables},
volume = {246},
journal = {Isr. J. Math.}
}

@article{Havrilla2,
author = {Havrilla, Alexander and Nayar, Piotr and Tkocz, Tomasz},
year = {2023},
pages = {2429--2445},
title = {Khinchin-type inequalities via {H}adamard's factorisation},
volume= {3},
journal = {Int. Math. Res. Not.}
}

@article{Jakimiuk,
author = {Jakimiuk, Jacek},
title = {Stability of {K}hintchine inequalities with optimal constants between the second and the $p$-th moment for $p\geq 3$},
volume = {32},
journal = {Bernoulli},
number = {3},
pages = {2524 -- 2542},
year = {2026}
}

@article{Jakimiuk2,
author = {Jakimiuk, Jacek and Tang, Colin and Tkocz, Tomasz},
title = {{K}hinchin inequalities for uniforms on spheres with a deficit},
volume = {313},
journal = {Math. Z.},
number = {11},
year = {2026}
}

@inproceedings{Latala1,
  title={A note on sums of independent uniformly distributed random variables},
  author={Lata{\l}a, Rafa{\l} and Oleszkiewicz, Krzysztof},
  booktitle={Colloquium Mathematicum},
  volume={68},
  number={2},
  pages={197--206},
  year={1995},
  organization={Polska Akademia Nauk. Instytut Matematyczny PAN}
}

@article{Latala2,
	title={On the best constant in the {K}hintchine-{K}ahane inequality},
	author={Lata{\l}a, Rafa{\l} and Oleszkiewicz, Krzysztof},
	journal={Studia Math.},
         volume={109},
        pages={101--104},
	year={1994}
}

@article{Melbourne,
	title={Quantitative form of {B}all's cube slicing in $\mathbb{R}^n$ and equality cases in the min-entropy power inequality},
	author={Melbourne, James and Roberto, Cyril},
	journal={Proc. Amer. Math. Soc.},
         volume={150},
	number = {8},
        pages={3595--3611},
	year={2022}
}

@article{Mordhorst,
title={The optimal constants in {K}hintchine's inequality for $2<p<3$},
	author={Mordhorst, Olaf},
	journal={Colloq. Math.},
         volume={147},
        pages={203--216},
	year={2017}
}

@article{Nayar1,
author = {Nayar, Piotr and Oleszkiewicz, Krzysztof},
year = {2012},
number= {2},
pages = {359--371},
title = {Khintchine-type inequalities with optimal constants via ultra-log concavity},
volume = {16},
journal = {Positivity}
}

@InProceedings{Nazarov,
author="Nazarov, Fedor 
and Podkorytov, Anatoliy",
editor="Havin, Victor P.
and Nikolski, Nikolai K.",
title="Ball, Haagerup, and Distribution Functions",
booktitle="Complex Analysis, Operators, and Related Topics",
year="2000",
publisher="Birkh{\"a}user Basel",
address="Basel",
pages="247--267",
isbn="978-3-0348-8378-8"
}

@article{Newman1,
author = {Newman, Charles M.},
year = {1975},
pages = {1--9},
title = {Inequalities for {I}sing models and field theories which obey the {L}ee-{Y}ang theorem},
volume = {41},
journal = {Comm. Math. Phys.}
}

@article{Newman2,
author = {Newman, Charles M.},
year = {1975},
pages = {913--915},
title = {An extension of {K}hintchine's inequality},
volume = {81},
number = {5},
journal = {Bull. Amer. Math. Soc.}
}

@article{Niculescu,
  author  = {Niculescu, Constantin},
  title   = {A new look at {N}ewton's inequalities},
  journal = {J. Inequal. Pure Appl. Math.},
  volume  = {1},
  number   = {2},
  pages    = {Article 17},
  year     = {2000}
}

@incollection{Ole,
  title={Comparison of moments via {P}oincar{\'e}-type inequality},
  author={Oleszkiewicz, Krzysztof},
  booktitle={Advances in stochastic inequalities ({A}tlanta, {GA}, 1997)},
  series={Contemp. Math.},
  volume={234},
  pages={135--148},
  year={1999},
  publisher={Amer. Math. Soc.},
  address={Providence, RI}
}

@article{Pinelis,
author = {Pinelis, Iosif},
title = {Extremal probabilistic problems and {H}otelling's {T} 2 test under a symmetry condition},
journal = {Ann. Statist.},
volume = {22},
number={1},
year = {1994},
pages={357--368}
}

@article{Khintchine,
author = {Khinchin, Aleksandr},
year = {1923},
pages = {109--116},
title = {\"{U}ber dyadische {B}r\"uche},
volume = {18},
journal = {Math. Z.}
}

@incollection{StanleyLog,
  title     = {Log-concave and unimodal sequences in algebra, combinatorics, and geometry},
  author    = {Stanley, Richard P.},
  booktitle = {Graph theory and its applications: East and West (Jinan, 1986)},
  volume    = {576},
  series    = {Annals of the New York Academy of Sciences},
  pages     = {500--535},
  year      = {1989},
  publisher = {New York Academy of Sciences}
}

@article{Stechkin,
author = {Stechkin, S. B.},
year = {1961},
pages = {357--366},
title = {On the best lacunary systems of functions},
volume = {25},
number = {3},
journal = {Izv. Akad. Nauk SSSR Ser. Mat.}
}

@article{Szarek,
author = {Szarek, Stanis{\l}aw},
year = {1976},
pages = {197--208},
title = {On  the best constant in the {K}hintchine inequality},
volume = {58},
journal = {Studia Math.}
}

@article{Whittle,
author = {Whittle, Peter},
year = {1960},
pages = {302--305},
title = {Bounds for the moments of linear and quadratic forms in independent random variables},
volume = {5},
journal = {Theory Probab. Appl.}
}

@article{Young,
	title={On the best possible constants in the {K}hintchine inequality},
	author={Young, R. M. G.},
	journal={J. London Math. Soc.},
	volume={14},
	number={2},
	pages={496--504},
	year={1976}
}

@book{Marshall,
  title={Inequalities: Theory of Majorization and its Applications},
  author={Marshall, Albert and Olkin, Ingram and Arnold, Barry},
  edition={2nd},
  year={2011},
  publisher={Springer New York}
}

\end{document}